\titleformat{\section}[block]{\normalsize\bfseries}{\arabic{section}.}{1em}{ }[]
\titleformat{\subsection}[block]{\normalsize\bfseries}{\arabic{section}.\arabic{subsection}}{1em}{}[]
\titleformat{\subsubsection}[block]{\normalsize\bfseries}{\arabic{subsection}-\alph{subsubsection}}{1em}{}[]
\titleformat{\paragraph}[block]{\small\bfseries}{[\arabic{paragraph}]}{1em}{}[]
\numberwithin{equation}{section}
\newtheorem{theorem}{Theorem}[section]
\newtheorem{definition}[theorem]{Definition}
\newtheorem{lemma}[theorem]{Lemma}
\newtheorem{corollary}[theorem]{Corollary}
\newtheorem{proposition}[theorem]{Proposition}
\newtheorem{remark}[theorem]{Remark}
\newcommand{\ZZ}{\mathbb{Z}}
\newcommand{\RR}{\mathbb{R}}
\newcommand{\dd}{\,\mathrm{d}}
\newcommand{\Bppp}{ \dot B^{\frac{d}{r}+3}_{r,1}}
\newcommand{\Bp}{\dot  B^{\frac{d}{r}}_{r,1}}
\newcommand{\Bpp}{ \dot B^{\frac{d}{r}+2}_{r,1}}
\newcommand{\Bpo}{ \dot B^{\frac{d}{r}+1}_{r,1}}
\newcommand{\B}{ \dot B^{\frac{d}{r}-1}_{r,1}}
\DeclareMathOperator{\Div}{div}
\newcommand{\Rmnum}[1]{\expandafter\@slowromancap\romannumeral #1@}
\begin{document}
\title[]{Sharp ill-posedness for
the non-resistive MHD equations in  Sobolev spaces}
\author[Q. Chen, Y. Nie, W. Ye]{Qionglei Chen, Yao Nie, Weikui Ye}
\address{Institute of Applied Physics and Computational
Mathematics, Beijing 100191, P.R. China}
\email{chen\_qionglei@iapcm.ac.cn}
\address{School of Mathematical Sciences and LPMC, Nankai University, Tianjin,
300071, P.R.China}
\email{nieyao@nankai.edu.cn}
\address{ School of Mathematical Sciences, South China Normal University
Guangzhou, Guangdong, 510631, P. R. China}
\email{904817751@qq.com}

\keywords{The non-resistive MHD equations; Ill-posedness; Besov spaces.}
\maketitle\title{\vspace{-1em}}

\begin{abstract}
In this paper, we  prove a sharp ill-posedness result for the incompressible non-resistive MHD equations. In any dimension $d\ge 2$, we show the ill-posedness of the non-resistive MHD equations in $H^{\frac{d}{2}-1}(\RR^d)\times H^{\frac{d}{2}}(\RR^d)$, which is sharp in view of the results of the local well-posedness in $H^{s-1}(\RR^d)\times H^{s}(\RR^d)(s>\frac{d}{2})$ established by  Fefferman et al.(Arch. Ration. Mech. Anal., \textbf{223}
(2), 677-691, 2017). Furthermore, we generalize the ill-posedness results from $H^{\frac{d}{2}-1}(\RR^d)\times H^{\frac{d}{2}}(\RR^d)$ to Besov spaces $B^{\frac{d}{p}-1}_{p, q}(\RR^d)\times B^{\frac{d}{p}}_{p, q}(\RR^d)$ and $\dot B^{\frac{d}{p}-1}_{p, q}(\RR^d)\times \dot B^{\frac{d}{p}}_{p, q}(\RR^d)$ for $1\le p\le\infty, q>1$.  Different from the ill-posedness mechanism of the incompressible Navier-Stokes equations in $\dot B^{-1}_{\infty, q}$ \cite{B,W}, we construct an initial data such that     the { paraproduct terms} (low-high frequency interaction) of the nonlinear term  make the main contribution to the norm inflation of the magnetic field. 
\end{abstract}
\section{Introduction}
{Magneto-hydrodynamics (MHD) is concerned with the study of the mutual interaction between magnetic fields and electrically conducting
fluids}. In this paper, we investigate the Cauchy problem for  the incompressible non-resistive MHD equations in $\RR^d$ for $d\ge 2$:
\begin{equation}\label{MHD}
\left\{ \aligned
    & \partial_t u-\Delta u+\nabla P=b\cdot\nabla b-u\cdot\nabla u,\\
     &\partial_t b+u\cdot\nabla b=b\cdot\nabla u,\\
     &\Div u=\Div b=0,\\
     &(u,b)|_{t=0}=(u_0, b_0).
\endaligned
\right.
\end{equation}
Here the initial data $(u_0, b_0)$ is divergence-free, $u=(u^1, u^2, \cdots, u^d)$ represents the velocity field, $b=(b^1, b^2, \cdots, b^d)$ denotes the magnetic field and $P$ is the scalar pressure. The non-resistive MHD system \eqref{MHD} can {be applied to} describe strong collisional plasmas, or plasmas with  extremely small resistivity due to these collisions \cite{C70, Lin}.

Compared with the {viscous and resistive  MHD system}, the study of well-posedness of the non-resistive MHD system \eqref{MHD}  becomes much more difficult owning to the hyperbolic type of magnetic equation. In recent years, there {has been}  some  significant progress in the global well-posedness of the system \eqref{MHD} with smooth initial data that is close to some nontrivial steady state (see \cite{AZ, LXZ, RWXZ, XZ, ZT} and the references therein). For the local-in-time existence of solutions to the non-resistive MHD system \eqref{MHD}, Jiu and Niu \cite{JN} firstly showed the local well-posedness result in $H^s(\RR^2)\times H^s(\RR^2)$ with $s\ge 3$ via viscous approximations. By means of a new commutator estimate, Fefferman et al. \cite{FMRR} proved  the local-in-time existence and uniqueness of strong solutions in $H^s(\RR^d)\times H^s(\RR^d)$ for $s>\frac{d}{2}, d=2,3$. Later, relying on maximal regularity estimates for the Stokes equation, {the authors in} \cite{FMRR2} established the local-in-time existence and uniqueness of solutions in $ H^{s-1+\varepsilon}(\RR^d)\times H^{s}(\RR^d)$ for $s>\frac{d}{2}, 0<\varepsilon<1, d=2, 3$. {Indeed, via time-space mixed Besov spaces $L^1_T(B^{\frac{d}{2}+1}_{2,1})$, one can generalize well-posedness result to  $ H^{s-1}(\RR^d)\times H^{s}(\RR^d)$ for $s>\frac{d}{2}$.} 
 With respect to  Besov spaces,  Chemin et al. \cite{CMRR} made generalisation of the main result in \cite{FMRR} and obtained the local existence with initial data $(u_0, b_0)\in B^{\frac{d}{2}-1}_{2,1}(\RR^d)\times B^{\frac{d}{2}}_{2,1}(\RR^d)$, $d=2, 3$.  Meanwhile, {the authors in} \cite{CMRR} proposed an interesting problem whether  or not the solution for the Cauchy problem of the system \eqref{MHD} exists locally in time and is unique in corresponding homogeneous Besov spaces.
Subsequently, Li, Tan and Yin \cite{LTY} solved this problem by showing  the local existence and uniqueness of the solution in $\dot B^{\frac{d}{p}-1}_{p, 1}(\RR^d)\times \dot B^{\frac{d}{p}}_{p, 1}(\RR^d)$ for $1\le p\le 2d$ and $d\ge 2$. Ye, Luo and Yin \cite{YLY} generalized the local existence result in $\dot B^{\frac{d}{p}-1}_{p, 1}(\RR^d)\times \dot B^{\frac{d}{p}}_{p, 1}(\RR^d)$ from $1\le p\le 2d$ to $1\le p<\infty$ and proved that the solution map from the initial data $(u_0, b_0)$ to solution $(u, b)$ is continuous from $\dot B^{\frac{d}{p}-1}_{p, 1}\times \dot B^{\frac{d}{p}}_{p, 1}$ to $C([0, T]; \dot B^{\frac{d}{p}-1}_{p, 1}\times \dot B^{\frac{d}{p}}_{p, 1})$ for $d\ge2, 1\le p\le 2d$, which combined with the result in \cite{LTY} shows the local well-posedness of the system \eqref{MHD} in $\dot B^{\frac{d}{p}-1}_{p, 1}\times \dot B^{\frac{d}{p}}_{p, 1}$ if $1\le p\le 2d$ for $d\ge 2$. Unfortunately, { whether  the system \eqref{MHD} is well-posed or not in $\dot B^{\frac{d}{p}-1}_{p, 1}\times \dot B^{\frac{d}{p}}_{p, 1} (2d<p<\infty)$  is still open.} 

When $b=0$, the system \eqref{MHD} is reduced to the classical incompressible Navier-Stokes  equations. Well-posedness issues of the Navier-Stokes equations in different types of the critical spaces have attracted attention of many researchers and there have been {many }relevant results until now (e.g. \cite{B,Can,Che,FK64,Kato84,KT,P,W,Y}). In the context of the critical Besov spaces $\dot B^{\frac{d}{p}-1}_{p, q}(p<\infty, q\le\infty)$,  the well-posedness of global strong solution with small initial data was established by Planchon \cite{P}, Cannone \cite{Can} and Chemin \cite{Che}. By showing the solution map is
 discontinuous at origin,  Bourgain-Pavlovi\'{c} \cite{B}, Yoneda \cite{Y} and Wang \cite{W} verified the ill-posedness of the Navier-Stokes equations in $\dot B^{-1}_{\infty, q}(1\le q\le\infty)$. {These results imply} the ill-posedness of the non-resistive MHD system \eqref{MHD} in $\dot B^{-1}_{\infty, q}\times \dot B^{0}_{\infty, q}$ for $q\ge 1$.

{Throughout the current literature,  Fefferman et al. in \cite{FMRR} established the local-in-time existence and uniqueness of strong solutions in $H^{s}(\RR^d)$ for $s>\frac{d}{2}$ to the system \eqref{MHD}  and suspected that it seems  ill-posed in $H^{\frac{d}{2}}(\RR^d)$. Subsequently,  they in  \cite{FMRR2} proved that  this system is local well-posedness in $H^{s+\epsilon-1}(\RR^d)\times H^s(\RR^d)$ for $s>\frac{d}{2}$ and concluded that this result is nearly optimal  in the scale of Sobolev spaces. In view of their interest on the sharp well-posedness result of this system in Sobolev spaces, we are focused on the problem \emph{whether this system is well-posed or not in $H^{\frac{d}{2}-1}(\RR^d)\times H^{\frac{d}{2}}(\RR^d)$}. Furthermore,  the problem whether the non-resistive MHD system~\eqref{MHD} is well-posed or not in $\dot B^{\frac{d}{p}-1}_{p,q}\times \dot B^{\frac{d}{p}}_{p,q}$ if $1\le p <\infty, q>1$ remains unsolved.  Therefore, we are interested in the following question:}

\noindent\textbf{Question:}\\
\quad\quad\emph{Is the system \eqref{MHD} well-posed or ill-posed in $H^{\frac{d}{2}-1}(\RR^d)\times H^{\frac{d}{2}}(\RR^d) $ and $\dot B^{\frac{d}{p}-1}_{p,q}(\RR^d)\times \dot  B^{\frac{d}{p}}_{p,q}(\RR^d)$ for $1\le p <\infty, q>1$?}

In the important work \cite{B} of  Bourgain-Pavlovi\'{c}, they  showed that the worst contribution
on regularity of solution to the Navier-Stokes equations in a short time stems from the {remainder terms} (high-high frequency interaction), which is a part of the Bony decomposition of the convection term $u\cdot\nabla u$. This  core idea that { remainder } terms in the convection term {prevent from the continuity of solution mapping } has also been applied to other ill-posedness results of Navier-Stokes equations \cite{W, Y}.

Different from the mechanism of the Navier-Stokes equations in Besov spaces, it is the\emph{ paraproduct terms} (low-high frequency interaction) not the {remainder }terms of the nonlinear term $b\cdot \nabla u$ of the system \eqref{MHD} that may lead to the discontinuous solution map of the magnetic field $b$ in the context of $ \dot B^{\frac{d}{p}}_{p,q}$. This observation forces us to construct an example to saturate the paraproduct operator from $\dot B^{\frac{d}{p}}_{p, 1}\times \dot B^{\frac{d}{p}}_{p,q}$ to $\dot B^{\frac{d}{p}}_{p,q}$ {for $q>1$}.
{More precisely, the key ingredient is to construct two Schwartz functions  $f, g$ such that
\[\big\| \mathscr{F}^{-1}\big(\widehat{f}_{2^{\frac{N}{2}}\leq|\xi|\leq2^{\frac{4N}{5}}}\big)
\mathscr{F}^{-1}\big(\widehat{g}_{2^{N-1} \leq|\xi|\leq2^{N+1}}\big)\big\|_{\dot B^{\frac{d}{p}}_{p,q}}\sim\|f
\|_{\dot B^{\frac{d}{p}}_{p,1}}\| g\|_{{\dot B^{\frac{d}{p}}_{p,q}}}.\]
Furthermore,  $f,g$ {satisfy} the following  estimates: for $q>1$ and $\frac{1}{q}<\alpha<1$,
$$\|f\|_{\dot B^{\frac{d}{p}}_{p,q}}+\|g\|_{\dot B^{\frac{d}{p}}_{p,q}}\leq (\ln\ln N)^{-1},\,\,\,\|f\|_{ \dot B^{\frac{d}{p}}_{p,1}}\sim N^{1-\alpha}, \,\,\,\|f\|_{ \dot B^{\frac{d}{p}}_{p,1}}\|g\|_{\dot B^{\frac{d}{p}}_{p,q}}\sim (\ln\ln N)^{-1}N^{1-\alpha}.$$
{Based on  such scalar functions $f$ and $g$, we construct a special initial data $b_0$ whose frequency is supported in a family of  cuboids in different dyadic annuli. Distinct from the structure of $b_0$, the frequency of $u_0$ is higher and locates in one cuboid  with different direction from that of $b_0$.  By using the Lagrangian coordinates in the equation of the magnetic field and the asymmetric structure between $u_0$ and $b_0$, we can show that  the frequency of the second approximation concentrates on  the superposition of many cuboids and thereby the norm inflation phenomenon occurs for a short time.}

}

{Firstly, let us recall a  the local-in-time existence and uniqueness of strong solutions in $H^s$ for $s>\frac{d}{2}$ to the  non-resistive MHD equations (1.1).
\begin{theorem}[\cite{FMRR2}]Let $d\ge2$. Take  $s>\frac{d}{2}$ and $0<\epsilon<1$.  Suppose that the initial
conditions satisfy $u_0\in H^{s-1+\epsilon}(\RR^d)$ and $b_0\in H^{s}(\RR^d)$. Then there exists $T_{\ast}>0$
such that the non-resistive MHD system \eqref{MHD} has a unique solution  $(u, b)$ with $b\in C([0, T_{\ast}); H^{s}(\RR^d))$ and
\[u\in C([0, T_{\ast}); H^{s-1+\epsilon}(\RR^d))\cap L^2([0, T_{\ast}); H^{s+\epsilon}(\RR^d))\cap L^1([0, T_{\ast}); H^{s+1}(\RR^d)).\]
\end{theorem}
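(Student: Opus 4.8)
\smallskip
\noindent\emph{Proof sketch.}
The plan is to run a linear iteration that uses the parabolic smoothing of the velocity equation to absorb the loss of one derivative in the hyperbolic magnetic equation (this is the maximal‑regularity scheme of \cite{FMRR2}). Fix $s>\frac d2$ and $\epsilon\in(0,1)$, and pick an exponent $p$ with $1<p<\frac{2}{2-\epsilon}$. Starting from $(u^0,b^0)=(e^{t\Delta}u_0,b_0)$, I would define $(u^{n+1},b^{n+1})$ inductively as the solutions of the decoupled linear problems
\[
\partial_t u^{n+1}-\Delta u^{n+1}+\nabla P^{n+1}=\PP\big(b^{n}\cdot\nabla b^{n}-u^{n}\cdot\nabla u^{n}\big),\qquad \Div u^{n+1}=0,\qquad u^{n+1}|_{t=0}=u_0,
\]
\[
\partial_t b^{n+1}+u^{n}\cdot\nabla b^{n+1}=b^{n}\cdot\nabla u^{n},\qquad b^{n+1}|_{t=0}=b_0 ,
\]
and carry the iterates in the space $E_T$ of pairs with $u\in C([0,T];H^{s-1+\epsilon})\cap L^2([0,T];H^{s+\epsilon})\cap L^{p}([0,T];H^{s+1})$ and $b\in C([0,T];H^{s})$, for a small $T$ to be chosen.

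The scheme is driven by two a priori estimates. For the velocity, $L^{p}$‑maximal regularity for the non‑stationary Stokes system gives
\[
\|u^{n+1}\|_{C_TH^{s-1+\epsilon}}+\|u^{n+1}\|_{L^2_TH^{s+\epsilon}}+\|u^{n+1}\|_{L^{p}_TH^{s+1}}\lesssim\|u_0\|_{H^{s-1+\epsilon}}+\big\|b^{n}\!\cdot\!\nabla b^{n}\big\|_{L^{p}_TH^{s-1}}+\big\|u^{n}\!\cdot\!\nabla u^{n}\big\|_{L^{p}_TH^{s-1}};
\]
the two forcing terms are handled by writing them as $\Div(b^n\otimes b^n)$, $\Div(u^n\otimes u^n)$, using that $H^s(\RR^d)$ is a Banach algebra for $s>\frac d2$ (so $\|b^n\cdot\nabla b^n\|_{H^{s-1}}\lesssim\|b^n\|_{H^s}^2$, which is bounded in $L^\infty_T\subset L^p_T$) and the interpolation bound $\|u^n\|_{L^{2/(1-\epsilon)}_TH^s}\lesssim\|u^n\|_{L^\infty_TH^{s-1+\epsilon}}^{\epsilon}\|u^n\|_{L^2_TH^{s+\epsilon}}^{1-\epsilon}$ for the Navier–Stokes term. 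The restriction $1<p<\frac{2}{2-\epsilon}$ is exactly what makes the data term land in $H^{s-1+\epsilon}$ while still yielding $u^{n+1}\in L^p_TH^{s+1}\subset L^1_TH^{s+1}$ on $[0,T]$. For the magnetic field, testing the $b^{n+1}$‑equation with $(I-\Delta)^{s}b^{n+1}$, discarding the principal part of the transport term via $\Div u^n=0$, estimating the remaining commutator by the commutator bound of \cite{FMRR} (valid for $s>\frac d2$), and using $\|b^n\cdot\nabla u^n\|_{H^s}\lesssim\|b^n\|_{H^s}\|u^n\|_{H^{s+1}}$, a Gr\"onwall argument gives
\[
\|b^{n+1}\|_{C_TH^s}\lesssim\Big(\|b_0\|_{H^s}+\int_0^T\|b^n\|_{H^s}\|u^n\|_{H^{s+1}}\,dt\Big)\exp\Big(C\!\int_0^T\|u^n\|_{H^{s+1}}\,dt\Big),
\]
with $\int_0^T\|u^n\|_{H^{s+1}}\,dt<\infty$ since $p>1$ and $H^{s+1}\hookrightarrow W^{1,\infty}$ ($s+1>\frac d2+1$). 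Taking $T$ small so that the smallness factors ($T^{1/p}$, $T^{1-1/p}$, $\int_0^T\|u^n\|_{H^{s+1}}$, \dots) are dominated, one propagates a uniform bound $\|(u^n,b^n)\|_{E_T}\le R$ with $R$ depending only on the data.

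Next I would show that $(u^n,b^n)$ is Cauchy in the weaker norm of $C([0,T];H^{s-1})^2$ — one cannot expect contraction at the top level because of the transport term — by running the same two estimates on the differences $\delta u^n=u^{n+1}-u^n$, $\delta b^n=b^{n+1}-b^n$, which, after possibly shrinking $T$, produces a contraction factor $\le\frac12$; the limit $(u,b)$ solves \eqref{MHD}, and the same lower‑order estimate gives uniqueness in this class. Finally, the strong continuity $b\in C([0,T];H^s)$ — rather than merely $L^\infty_TH^s\cap C_TH^{s'}$ for $s'<s$ — is obtained by a Bona--Smith‑type mollification of the data combined with the continuity of $t\mapsto\|b(t)\|_{H^s}$ read off from the transport energy identity, or, equivalently, from the Cauchy (Lagrangian) representation $b(t)=\big((\nabla\Phi_t)\,b_0\big)\circ\Phi_t^{-1}$ for the flow $\Phi_t$ of $u$, which is controlled precisely by the $L^1_T H^{s+1}$‑norm of $u$.

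I expect the genuine obstacle to be the first a priori estimate: closing the $H^s$‑bound for $b$ forces $u\in L^1_TH^{s+1}$, i.e.\ a full derivative above the scaling‑critical regularity of the velocity, and this gain can come only from the viscous term. Extracting it in a way compatible with $u_0\in H^{s-1+\epsilon}$ is exactly why $L^p$ (rather than $L^1$) parabolic maximal regularity has to be used and why the parameter $\epsilon>0$ cannot be removed by this method; every other step is a variant of a standard transport or parabolic energy estimate.
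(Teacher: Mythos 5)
This statement is Theorem~1.1 of the paper, quoted verbatim from \cite{FMRR2} as background; the paper itself gives no proof of it (it is invoked only to guarantee that the Schwartz data constructed later generate a unique local solution), so there is no in-paper argument to compare yours against. Your sketch reconstructs exactly the strategy the introduction attributes to \cite{FMRR2}: parabolic maximal regularity for the Stokes part to gain the full derivative $u\in L^1_TH^{s+1}$ needed to close the $H^s$ transport estimate for $b$, the commutator estimate of \cite{FMRR} for the transport part, a linearized iteration with uniform bounds, contraction in a lower norm, and a Bona--Smith/Lagrangian argument for strong continuity of $b$. The architecture and your diagnosis of why $\epsilon>0$ enters are correct.

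One step does not close as literally written. A single maximal-regularity inequality at one exponent $p<\tfrac{2}{2-\epsilon}$ cannot deliver all three components of the velocity norm simultaneously: for the forced part, $f\in L^p_TH^{s-1}$ gives $u^{n+1}\in W^{1,p}_TH^{s-1}\cap L^p_TH^{s+1}$, whose time-trace space is $B^{s+1-2/p}_{2,p}$, and $s+1-2/p<s-1+\epsilon$ precisely when $p<\tfrac{2}{2-\epsilon}$, so the claimed $C_TH^{s-1+\epsilon}$ bound does not follow; conversely $p\le\tfrac{2}{2-\epsilon}$ is forced if the free evolution of $u_0\in H^{s-1+\epsilon}$ is to land in $L^p_TH^{s+1}$. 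The repair is standard but must be made explicit: your forcings are better than $L^p_T$ in time ($b^n\cdot\nabla b^n\in L^\infty_TH^{s-1}$ and $u^n\cdot\nabla u^n\in L^{1/(1-\epsilon)}_TH^{s-1}$, with $\tfrac{1}{1-\epsilon}>\tfrac{2}{2-\epsilon}$), so one applies maximal regularity with different time exponents for the different components --- or, as in \cite{FMRR2}, runs a separate $H^{s-1+\epsilon}$ energy estimate to get the $C_TH^{s-1+\epsilon}\cap L^2_TH^{s+\epsilon}$ bounds and reserves maximal regularity solely for the $L^p_TH^{s+1}\subset L^1_TH^{s+1}$ gain. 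With that correction the scheme closes and the rest of your outline is sound.
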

\begin{remark}By using the time-space mixed Besov spaces $\mathcal{L}^1_T(B^{\frac{d}{2}+1}_{2,1})$, one can show that the system \eqref{MHD} is locally well-posed in $H^{s-1}(\RR^d)\times H^s(\RR^d)$ for $s>\frac{d}{2}$. That is, let $d\ge2$ and $s>\frac{d}{2}$, if initial data $(u_0, b_0)\in H^{s-1}(\RR^d)\times H^s(\RR^d)$, there exists $T_{\ast}>0$ such that the  system \eqref{MHD} has a unique solution  $(u, b)$ with
$u\in C([0, T_{\ast}); H^{s-1}(\RR^d))\cap \mathcal{L}^1([0,T_{\ast}); B^{s+1}_{2,2})$ and $b\in C([0, T_{\ast}); H^{s}(\RR^d))$.
\end{remark}
This theorem guarantees that for any initial data $(u_0, b_0)\in \mathcal{S}(\RR^d)$, there locally exists a unique solution $(u,b)$ to  the  non-resistive MHD equations \eqref{MHD}.}

{Next, we are in position to state our ill-posedness result by constructing special Schwarz functions as initial data. }
\begin{theorem}\label{H}Let $d\ge 2$.  The system \eqref{MHD} is ill-posed in $H^{\frac{d}{2}-1}(\RR^d)\times H^{\frac{d}{2}}(\RR^d)$ in the sense of ``norm inflation''. More precisely, for any $\delta>0$, there exists a solution $(u, b)$ to the non-resistive MHD equations \eqref{MHD} such that initial data $(u_0, b_0)\in \mathcal{S}(\RR^d)$ satisfies
\[\|u_{0}\|_{H^{\frac{d}{2}-1}}+\|b_{0}\|_{H^{\frac{d}{2}}}\le\delta, \]
and for some $0<t<\delta$,
\[\|b(t,\cdot)\|_{H^{\frac{d}{2}}}>\frac{1}{\delta}. \]
\end{theorem}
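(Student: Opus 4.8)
The plan is to exhibit, for each large parameter $N$, a Schwartz initial datum $(u_0,b_0)$ with small norm in $H^{\frac d2-1}\times H^{\frac d2}$ whose first Picard iterate already produces a contribution of size $\gtrsim N^{1-\alpha}(\ln\ln N)^{-1}\to\infty$ to $\|b(t)\|_{H^{\frac d2}}$ at a short time $t=t_N\to 0$. Writing the magnetic equation via Duhamel (the $b$-equation being transport-type, one integrates $\partial_t b = b\cdot\nabla u - u\cdot\nabla b$), the leading term after $b_0$ is $\int_0^t b_0\cdot\nabla u(s)\,ds$, and since $u$ solves a heat equation with forcing $b\cdot\nabla b - u\cdot\nabla u - \nabla P$, to leading order $u(s)\approx e^{s\Delta}u_0$. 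Thus the term to saturate is
\[
B_2(t) := \int_0^t b_0\cdot\nabla\, e^{s\Delta}u_0\,\rd s,
\]
which is precisely a paraproduct-type (low-high) interaction when $b_0$ lives at frequencies $\sim 2^{N/2}$–$2^{4N/5}$ and $u_0$ at frequency $\sim 2^N$, as announced in the introduction. First I would fix the scalar building blocks $f,g$ from the displayed estimates in the excerpt (frequencies of $f$ spread over many dyadic annuli below $2^{4N/5}$, frequency of $g$ concentrated near $2^N$), set $u_0$ with frequency in a single cuboid near $2^N$ in direction, say, $e_1$, and build $b_0$ from $f$ as a sum over cuboids in direction $e_2$ (or another coordinate direction) so that $\Div u_0=\Div b_0=0$ and $b_0\cdot\nabla$ hits $u_0$ nontrivially; the asymmetry of directions guarantees the product does not vanish and that the output frequencies land on a union of distinct cuboids, so no cancellation occurs when summing.

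The execution then splits into three estimates. First, the \emph{smallness of the data}: by the choice of $f,g$ and the frequency localization, $\|u_0\|_{H^{\frac d2-1}}+\|b_0\|_{H^{\frac d2}}\lesssim \|g\|_{\dot B^{d/p}_{p,q}}+\|f\|_{\dot B^{d/p}_{p,q}}\lesssim (\ln\ln N)^{-1}$ (with $p=2$, $q=2$ for the Sobolev case, using $H^s=B^s_{2,2}$ up to equivalence, or working with the $B^s_{2,1}\hookrightarrow H^s$ and $H^s\hookrightarrow B^s_{2,\infty}$ chain), which is $\le\delta$ once $N$ is large. Second, the \emph{main term lower bound}: on the time interval $t\sim t_N$ with $t_N$ chosen so that $t_N 2^{2N}\sim 1$ (or a small constant), the heat semigroup acting on $u_0$ is essentially the identity on the relevant frequencies up to a harmless constant factor, $e^{s\Delta}u_0$ has $\nabla$ contributing a factor $\sim 2^N$, the time integral contributes $t_N\sim 2^{-2N}$, and the spatial product $\mathscr F^{-1}(\widehat f_{2^{N/2}\le|\xi|\le 2^{4N/5}})\,\mathscr F^{-1}(\widehat g_{2^{N-1}\le|\xi|\le 2^{N+1}})$ has $\dot B^{d/p}_{p,q}$-norm $\sim \|f\|_{\dot B^{d/p}_{p,1}}\|g\|_{\dot B^{d/p}_{p,q}}\sim (\ln\ln N)^{-1}N^{1-\alpha}$; balancing, $\|B_2(t_N)\|_{H^{\frac d2}}\gtrsim t_N\cdot 2^N\cdot 2^N\cdot 2^{-2N}(\ln\ln N)^{-1}N^{1-\alpha}$ — here one must track the powers of $2^N$ carefully so that the $\dot B^{d/p}_{p,q}$-normalization of $u_0$ versus $H^{\frac d2-1}$ matches; the upshot is a lower bound $\gtrsim (\ln\ln N)^{-1}N^{1-\alpha}\to\infty$. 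Third, the \emph{error estimates}: every other term in the Duhamel expansion of $b$ — the genuinely quadratic and higher contributions from $u\cdot\nabla b$, from the pressure/Leray projector in the $u$-equation, and the difference $u(s)-e^{s\Delta}u_0$ — must be shown to be $O(1)$ or $o(N^{1-\alpha}(\ln\ln N)^{-1})$ in $H^{\frac d2}$ on $[0,t_N]$; this is where one exploits that these are \emph{remainder} (high-high) or balanced interactions for which the product law in $\dot B^{d/p}_{p,1}$ / $H^{\frac d2}$ with the extra heat smoothing or the extra factor of $\|f\|_{\dot B^{d/p}_{p,q}}\lesssim(\ln\ln N)^{-1}$ kills them, together with a short-time bootstrap / fixed-point bound for $(u,b)$ guaranteed by Theorem above applied to the Schwartz data.

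The main obstacle will be the third step: controlling the full nonlinear error, in particular the feedback term $\int_0^t u\cdot\nabla b\,\rd s$ and the self-interaction $b\cdot\nabla b$ entering $u$, uniformly on $[0,t_N]$ while the solution norm is allowed to be as large as $N^{1-\alpha}(\ln\ln N)^{-1}$. The strategy here is the standard one for norm-inflation arguments: run a contraction-mapping / a priori estimate on a slightly larger functional space (e.g. $\widetilde L^\infty_t \dot B^{d/p-1}_{p,1}\cap L^1_t\dot B^{d/p+1}_{p,1}$ for $u$ and $\widetilde L^\infty_t\dot B^{d/p}_{p,1}$ for $b$) that is compatible with the local theory, observe that on the very short interval $[0,t_N]$ the $L^1_t$-norms of the high-frequency pieces are small (they gain a factor $t_N 2^{2N}\lesssim 1$ but against the $(\ln\ln N)^{-1}$ data smallness this is enough), and then split the output of each error term by Bony decomposition to check that none of them can reach the size of the paraproduct main term; the key structural fact making this work — and the whole reason the theorem is true at this regularity — is that only the single low-high interaction $b_0\cdot\nabla(e^{s\Delta}u_0)$ benefits from the $\dot B^{d/p}_{p,1}$-versus-$\dot B^{d/p}_{p,q}$ gap that produces the $N^{1-\alpha}$ growth, while every other term sees only $\dot B^{d/p}_{p,q}$-sized ($\lesssim(\ln\ln N)^{-1}$) inputs on at least one factor.
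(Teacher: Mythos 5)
Your overall strategy is the one the paper follows: the paper proves Theorem \ref{H} as an immediate corollary of Theorem \ref{main} (the data's Fourier support in an annulus makes $H^{\frac d2}$ and $\dot B^{\frac d2}_{2,2}$ norms comparable), and the proof of Theorem \ref{main} uses exactly your data (cuboid-supported $b_0$ spread over dyadic shells $2^{N/2}\le |\xi|\le 2^{4N/5}$, $u_0$ in one cuboid at frequency $2^N$ with a transverse orientation), the same main term $\int_0^T b_0\cdot\nabla e^{s\Delta}u_0\,\mathrm{d}s$ evaluated at $x=0$ via $\|h\|_{L^\infty}\ge |\int\widehat h|$, the same time scale $T\sim 2^{-2N}$ (the paper takes $T=(\ln N)^{-1}2^{-2N}$), and a quantitative local well-posedness bootstrap (Proposition \ref{localwell}) to control errors.

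There is, however, a genuine gap in your third step, and it is located precisely where you flag "the main obstacle": the transport feedback term $\int_0^t u\cdot\nabla b\,\mathrm{d}s$. With the a priori bounds that the short-time theory actually delivers ($\|u\|_{\mathcal L^\infty_T(\dot B^{d/p}_{p,1})}\lesssim 2^N$, $\|b\|_{\mathcal L^\infty_T(\dot B^{d/p+1}_{p,1})}\lesssim N^{1-\alpha}2^N$), the product law gives $\|\int_0^T u\cdot\nabla b\,\mathrm{d}s\|_{\dot B^{d/p}_{p,1}}\lesssim T\cdot 2^N\cdot N^{1-\alpha}2^N=(\ln N)^{-1}N^{1-\alpha}$, whereas the main term's lower bound carries an unavoidable geometric loss from the thin cuboids and the normalization of $u_0$ and is only $\gtrsim(\ln N)^{-1}(\ln\ln N)^{-1-3d}N^{1-\alpha}$ (see \eqref{IB}). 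So the "standard bootstrap plus Bony splitting" you describe does \emph{not} close: the naive error exceeds the main term by a power of $\ln\ln N$. One can repair this in the Eulerian frame only by a further iteration (e.g. writing $b=b_0+\tilde b$ and exploiting that $u\cdot\nabla b_0$ is exponentially small by frequency separation while $\tilde b$ gains an extra $(\ln N)^{-1}$ in $\dot B^{d/p+1}_{p,1}$), but you do not indicate this. The paper sidesteps the issue entirely by passing to Lagrangian coordinates: writing $b(t,\Phi(x,t))=b_0(x)+\int_0^t(b\cdot\nabla u)(s,\Phi(x,s))\,\mathrm{d}s$ removes $u\cdot\nabla b$ from the error budget altogether, replacing it with composition errors $b\circ\Phi-b_0$ and $(\nabla u)\circ\Phi-\nabla u$ that are genuinely quadratic in $T$ and are controlled via Lemma \ref{uphi}. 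Either you adopt this device or you must supply the extra iteration; as written, your error estimate fails by a factor of $(\ln\ln N)^{1+3d}$.
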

\begin{remark}
{Note that the system \eqref{MHD} is {locally} well-posed in $H^{s-1}(\RR^d)\times H^{s}(\RR^d)$ with $s>\frac{d}{2}$, Theorem \ref{H} shows the sharp ill-posedness for \eqref{MHD} in $H^{\frac{d}{2}-1}(\RR^d)\times H^{\frac{d}{2}}(\RR^d)$.
{As it is known,  the Navier-Stokes equations are locally well-posed in the critical Sobolev space $H^{\frac{d}{2}-1}(\RR^d)$. On the other hand, Theorem \ref{H} shows that the non-resistive MHD equations is ill-posed under the framework of the critical Sobolev space $H^{\frac{d}{2}-1}(\RR^d)\times H^{\frac{d}{2}}(\RR^d)$}. Interestingly, in our example,  the ``norm inflation'' happens to the magnetic field not  the flow field {\rm(}see\eqref{flow field1}\rm{)},  which  reflects that  the velocity field
plays a more important role than the magnetic field  in  the interaction between the two fields of the non-resistive MHD system.}
\end{remark}
We are indeed able to prove a stronger statement than Theorem \ref{H}. More precisely, we can show the following main result:
\begin{theorem}[Main result]\label{main}Let $d\ge 2$, $1\le p\le\infty$ and $q>1$. {For any $\delta>0$},  there exists a solution $(u, b)$ to the non-resistive MHD equations \eqref{MHD} such that initial data $(u_0, b_0)\in \mathcal{S}(\RR^d)$ satisfies that the Fourier transforms of $(u_0, b_0)$ are supported on an annulus and
\[\|u_{0}\|_{\dot B^{\frac{d}{p}-1}_{p,q}}+\|b_{0}\|_{ \dot B^{\frac{d}{p}}_{p,q}}\le\delta, \]
and for some $0<t<\delta$,
\[\|b(t,\cdot)\|_{\dot B^{\frac{d}{p}}_{p,q}}>\frac{1}{\delta}. \]
\end{theorem}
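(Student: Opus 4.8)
The plan is to prove Theorem~\ref{main} by the now-standard ``norm inflation'' scheme: we exhibit small initial data, build an explicit first Picard iterate, show that a single nonlinear step already produces a large contribution to $\|b(t)\|_{\dot B^{d/p}_{p,q}}$, and then control all higher-order terms in the Duhamel expansion so that the first correction dominates. The novelty, as advertised in the introduction, is that the dangerous term is the \emph{paraproduct} $b_0\cdot\nabla u$ rather than the remainder, so the construction of the initial data and the bookkeeping of frequency supports must be adapted accordingly.

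First I would set up the scalar building blocks. Following the sketch in the excerpt, I would fix a large integer $N$ and parameters $q>1$, $\tfrac1q<\alpha<1$, and construct Schwartz functions $f$ (a superposition of roughly $N$ single-cuboid blocks spread over dyadic annuli $2^j$, $j\in[\tfrac N2,\tfrac{4N}5]$, normalized so that each block has size $\sim j^{-\alpha}$ in $\dot B^{d/p}_{p,q}$) and $g$ (a single block near $|\xi|\sim 2^N$), with the three scaling relations $\|f\|_{\dot B^{d/p}_{p,q}}+\|g\|_{\dot B^{d/p}_{p,q}}\le(\ln\ln N)^{-1}$, $\|f\|_{\dot B^{d/p}_{p,1}}\sim N^{1-\alpha}$, and $\|f\|_{\dot B^{d/p}_{p,1}}\|g\|_{\dot B^{d/p}_{p,q}}\sim(\ln\ln N)^{-1}N^{1-\alpha}$; the crucial product estimate is that convolving the ``$f$-part'' (low frequencies, $2^{N/2}\le|\xi|\le2^{4N/5}$) against the ``$g$-part'' (high frequency, $|\xi|\sim2^N$) spreads mass across $\sim N$ distinct dyadic annuli near $2^N$, so that the $\dot B^{d/p}_{p,q}$ norm of the product picks up the full $\ell^1$-in-$j$ sum $\|f\|_{\dot B^{d/p}_{p,1}}$ times $\|g\|_{\dot B^{d/p}_{p,q}}$ rather than an $\ell^q$ sum. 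I would then promote $f,g$ to divergence-free vector fields by choosing $b_0$ built from $f$ with one fixed coordinate direction and $u_0$ built from $g$ with a different (``asymmetric'') direction, each multiplied by a small amplitude; divergence-freeness is arranged exactly as in the Navier-Stokes constructions by taking frequency vectors essentially orthogonal to the chosen amplitude directions, and one checks the Fourier supports lie in a single annulus so the hypothesis of the theorem holds, with $\|u_0\|_{\dot B^{d/p-1}_{p,q}}+\|b_0\|_{\dot B^{d/p}_{p,q}}\le\delta$.

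Next I would pass to the Lagrangian form of the magnetic equation: writing $b$ in coordinates following the flow of $u$, the equation $\partial_t b+u\cdot\nabla b=b\cdot\nabla u$ integrates (to leading order) to $b(t)\approx b_0 + t\,(b_0\cdot\nabla)u_0 + \text{(heat-semigroup corrections)} + \text{(higher order)}$, and the point is that $(b_0\cdot\nabla)u_0$ is precisely a paraproduct of the type $\big[\mathscr F^{-1}(\widehat f\,\mathbf 1_{2^{N/2}\le|\xi|\le2^{4N/5}})\big]\cdot\big[\mathscr F^{-1}(\widehat g\,\mathbf 1_{|\xi|\sim2^N})\big]$ analyzed above. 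Thus for a suitable $t\sim N^{-2+\alpha}\cdot(\text{log factor})$ — small enough that $\delta>t$ and that the diffusion $e^{t\Delta}$ acting near frequency $2^N$ has not yet killed things, which forces the $f$-frequencies to be strictly smaller than $2^N$, explaining the gap $[\,2^{N/2},2^{4N/5}\,]$ — the term $t\,(b_0\cdot\nabla)u_0$ has $\dot B^{d/p}_{p,q}$ norm $\gtrsim t\cdot 2^N\cdot\|f\|_{\dot B^{d/p}_{p,1}}\|g\|_{\dot B^{d/p}_{p,q}}$, which I arrange to be $\ge 1/\delta$ by choosing $N$ large. The remaining work is to bound everything else: (i) $b_0$ itself and the heat-flow part of $b_0$ are $\le\delta$ in $\dot B^{d/p}_{p,q}$ by construction; (ii) the velocity Picard iterates $u^{(1)},u^{(2)},\dots$ built from the Navier-Stokes-type nonlinearity $\mathbb P(b\cdot\nabla b-u\cdot\nabla u)$ stay small (here the smallness of the amplitudes and the parabolic smoothing give a convergent Neumann-type series, and one also records the estimate \eqref{flow field1} showing the flow field does \emph{not} inflate); (iii) all multilinear terms in the Duhamel iteration of order $\ge 2$ in the nonlinearity, and the contribution of the flow map $X_t$ beyond its first-order Taylor term, are controlled by a geometric series whose sum is $o(1/\delta)$. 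Combining (i)--(iii) with the lower bound on $t(b_0\cdot\nabla)u_0$ gives $\|b(t)\|_{\dot B^{d/p}_{p,q}}>1/\delta$.

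The main obstacle, and the place where real care is needed, is step two in the first paragraph: proving that the frequency-localized product $\mathscr F^{-1}(\widehat f\,\mathbf 1_{\text{mid}})\cdot\mathscr F^{-1}(\widehat g\,\mathbf 1_{\text{high}})$ genuinely achieves $\|f\|_{\dot B^{d/p}_{p,1}}\|g\|_{\dot B^{d/p}_{p,q}}$ — i.e.\ that there is no cancellation and the output mass is spread over $\Omega(N)$ separate dyadic shells so that the $\ell^q(j)$ sum in the target Besov norm is comparable to the $\ell^1(j)$ sum on the $f$-side. This is exactly the ``saturation of the paraproduct operator $\dot B^{d/p}_{p,1}\times\dot B^{d/p}_{p,q}\to\dot B^{d/p}_{p,q}$'' alluded to in the introduction; it requires choosing the cuboid centers and orientations of the $f$-blocks so that, after adding the (essentially fixed) $g$-frequency, they land in distinct annuli $|\xi|\sim 2^N(1+c\,2^{j-N})$, and showing the corresponding pieces of the product are genuinely present (a stationary-phase / explicit-Fourier-support argument, using that the building blocks are plateaus times complex exponentials so the convolution is again a plateau). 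Everything else — divergence-free corrections, heat-flow estimates, Lagrangian change of variables, and the geometric control of higher iterates — is routine harmonic analysis once this combinatorial/Fourier-support lemma is in hand.
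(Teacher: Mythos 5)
Your overall architecture matches the paper's: $b_0$ a sum of $\sim N$ weighted blocks at frequencies $2^j$, $j\in[\tfrac N2,\tfrac{4N}5]$, $u_0$ a single transversally-oriented block at frequency $2^N$, Lagrangian coordinates for the magnetic equation, isolation of the first Duhamel term $\int_0^T b_0\cdot\nabla e^{t\Delta}u_0\,\mathrm{d}t$, and control of all remaining terms. However, the key lemma you single out as ``the main obstacle'' is stated backwards, and as stated it would produce no inflation. You claim the product of the $f$-part and the $g$-part ``spreads mass across $\sim N$ distinct dyadic annuli near $2^N$, so that the $\dot B^{d/p}_{p,q}$ norm of the product picks up the full $\ell^1$-in-$j$ sum.'' Two problems. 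First, the output frequencies $2^N+c\,2^j$ with $j\le\tfrac{4N}{5}$ all lie inside the \emph{single} Littlewood--Paley annulus $|\xi|\sim 2^N$; they cannot populate $\Omega(N)$ separate dyadic shells. Second, even if they could, a product whose output is spread over $N$ shells, each shell receiving one $f$-block's contribution of size $\sim j^{-\alpha}\|g\|$, would have $\dot B^{d/p}_{p,q}$ norm $\bigl(\sum_j j^{-\alpha q}\bigr)^{1/q}\|g\|\sim N^{\frac1q-\alpha}\|g\|$, i.e.\ exactly the harmless quantity $\|f\|_{\dot B^{d/p}_{p,q}}\|g\|_{\dot B^{d/p}_{p,q}}$, not $\|f\|_{\dot B^{d/p}_{p,1}}\|g\|_{\dot B^{d/p}_{p,q}}$. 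The actual mechanism is the opposite of spreading: all $N$ low--high interactions land in the \emph{same} output block $\dot\Delta_N$, and because the Fourier transforms of the building blocks are non-negative plateaus on cuboids, the contributions add constructively there. The paper exploits this via $\|\dot\Delta_N I^{\rm B}\|_{L^\infty}\ge|\dot\Delta_N I^{\rm B}(0)|=\bigl|\int\widehat{\dot\Delta_N I^{\rm B}}(\xi)\,\mathrm{d}\xi\bigr|$, which is a sum of $N$ positive terms of size $\sim j^{-\alpha}$ and hence $\gtrsim N^{1-\alpha}$ for that single block. This concentration-plus-positivity step (together with checking that the $b_0^1\,\partial_1 u_0^2$ contribution is smaller by a factor $(\ln\ln N)^{-2}$, which is the reason for the anisotropic $(\ln\ln N)^{-1}$-thin cuboids) is exactly what converts the $\ell^1$-in-$j$ sum into the output norm; without it your lower bound collapses to the $\ell^q$ quantity and the argument fails for every $q>1$.

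A secondary inconsistency: you propose $t\sim N^{-2+\alpha}$ up to logarithms while simultaneously requiring that $e^{t\Delta}$ not yet damp frequency $2^N$; the latter forces $t\lesssim 2^{-2N}$ (the paper takes $T=(\ln N)^{-1}2^{-2N}$), and the size of $I^{\rm B}$ then comes from $\int_0^T|\xi-\eta|e^{-s|\xi-\eta|^2}\mathrm{d}s\sim T\,2^{N}$ against an amplitude of $u_0$ normalized to carry an extra factor $2^N$, not from a polynomially long time. Relatedly, the gap between $2^{4N/5}$ and $2^N$ is needed so that the sum frequency stays in the annulus $|\xi|\sim2^N$ and the change of variables $\eta\mapsto2^j\tilde\eta$ leaves $\tilde\xi-2^{j-N}\tilde\eta$ inside $\operatorname{supp}\widehat\psi$ uniformly in $j$, not to protect the $b_0$ frequencies from diffusion ($b$ solves a transport equation and does not diffuse). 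These points are repairable; the paraproduct-saturation step above is the essential gap.
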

Because the Fourier transforms of $(u_0, b_0)$ in Theorem \ref{main} are supported on an annulus, one obtains that $\|u_{0}\|_{ H^{\frac{d}{2}-1}}\approx\|u_{0}\|_{\dot B^{\frac{d}{2}-1}_{2,2}}$ and $\|b_{0}\|_{  H^{\frac{d}{2}}}\approx\|b_{0}\|_{ \dot B^{\frac{d}{2}}_{2,2}}$.
Taking advantage of $H^{\frac{d}{2}}\hookrightarrow \dot H^{\frac{d}{2}} $, we immediately conclude Theorem \ref{H} by Theorem~\ref{main}. Moreover,  one can immediately show the ill-posedness in { the }corresponding nonhomogeneous Besov spaces by the proof of Theorem \ref{main}.
\begin{corollary}The system \eqref{MHD} is ill-posed in $B^{\frac{d}{p}-1}_{p,q}(\RR^d)\times   B^{\frac{d}{p}}_{p,q}(\RR^d)$ for $1\le p\le\infty, q>1$.
\end{corollary}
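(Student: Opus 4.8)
The plan is to recycle, essentially verbatim, the example constructed for Theorem~\ref{main}, taking advantage of the fact that both the initial data and the part of the solution responsible for the blow-up live at high frequencies, where the homogeneous and inhomogeneous Littlewood--Paley projections coincide. Given $\delta>0$, I would take the very same $(u_0,b_0)\in\mathcal S(\RR^d)$ and solution $(u,b)$ produced by Theorem~\ref{main}, with inflation time $t\in(0,\delta)$. Since by construction $\widehat{u_0}$ and $\widehat{b_0}$ are supported in an annulus lying at frequencies $\gtrsim 2^{N/2}$ for a large parameter $N=N(\delta)$, one has $S_0u_0=S_0b_0=0$ and $\Delta_j$ agrees with $\dot\Delta_j$ on both functions for every $j\ge 0$; consequently the inhomogeneous data norms reduce to the homogeneous ones, $\|u_0\|_{B^{\frac{d}{p}-1}_{p,q}}=\|u_0\|_{\dot B^{\frac{d}{p}-1}_{p,q}}\le\delta$ and $\|b_0\|_{B^{\frac{d}{p}}_{p,q}}=\|b_0\|_{\dot B^{\frac{d}{p}}_{p,q}}\le\delta$, so the data remains admissibly small in the inhomogeneous scale. (Should the construction instead be normalized to a fixed annulus $\{|\xi|\sim1\}$, the two data norms are merely equivalent with an absolute constant, which can be absorbed into $\delta$.)

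For the lower bound I would use that the proof of Theorem~\ref{main} establishes the norm inflation in a frequency-localised way: it produces a set $J$ of high dyadic scales---those of the superposed cuboids, all satisfying $j\ge1$ once $N$ is large---for which $\sum_{j\in J}2^{j\frac{d}{p}q}\|\dot\Delta_jb(t)\|_{L^p}^q\gtrsim\delta^{-q}$. Because $\dot\Delta_j=\Delta_j$ for $j\ge1$, the same sum is a lower bound for $\|b(t)\|_{B^{\frac{d}{p}}_{p,q}}^q$, whence $\|b(t)\|_{B^{\frac{d}{p}}_{p,q}}>1/\delta$ after shrinking $\delta$ by a harmless absolute factor. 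This yields the required norm inflation---hence the ill-posedness---in $B^{\frac{d}{p}-1}_{p,q}\times B^{\frac{d}{p}}_{p,q}$ for all $1\le p\le\infty$ and $q>1$.

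When $p<\infty$ one can also argue more softly, bypassing any inspection of the construction: since $\frac{d}{p}>0$, the standard inequality $\|f\|_{\dot B^{s}_{p,q}}\le C\|f\|_{B^{s}_{p,q}}$ (valid for every $s>0$) applies to the smooth decaying function $b(t)$ and already gives $\|b(t)\|_{B^{\frac{d}{p}}_{p,q}}\ge C^{-1}\|b(t)\|_{\dot B^{\frac{d}{p}}_{p,q}}>C^{-1}\delta^{-1}$. I would record this as a remark but keep the frequency-localised argument as the main one, because it is the only one that survives at the endpoint $p=\infty$, $\frac{d}{p}=0$, where no homogeneous-to-inhomogeneous Besov embedding is available.

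I do not anticipate a genuine obstacle: the whole matter is bookkeeping. The one point that needs care is exactly this $p=\infty$ endpoint---there one cannot cite an abstract embedding at regularity $0$ and must instead check directly, from the construction underlying Theorem~\ref{main}, that the inflating frequencies are $\gtrsim2^{N/2}\gg1$ and that the low-frequency part of $b(t)$ stays bounded on $[0,\delta]$, a fact already contained in the a priori control used in that proof. Granting this, the corollary follows immediately.
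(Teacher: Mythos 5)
Your argument is correct and follows essentially the same route as the paper, which justifies the corollary only by remarking that it is immediate from the proof of Theorem \ref{main}: the point, as you identify, is that the initial data and the inflating mode are supported at high frequencies, where $\Delta_j=\dot\Delta_j$, so both the smallness of the data and the lower bound transfer verbatim to the nonhomogeneous scale (with the soft embedding $\|\cdot\|_{\dot B^{s}_{p,q}}\le C\|\cdot\|_{B^{s}_{p,q}}$, $s>0$, available as a shortcut when $p<\infty$). The only cosmetic discrepancy is that the paper extracts its lower bound from the single block $\dot\Delta_N$ of $b(T,\Phi(\cdot,T))$ (composed with the flow) and then transfers via the embedding $\dot B^{\frac{d}{p}}_{p,q}\hookrightarrow\dot B^{0}_{\infty,q}$ and Lemma \ref{uphi}, rather than from a family of blocks of $b(t)$ itself, but this does not affect your conclusion.
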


\begin{remark}
 Recall that the Navier-Stokes equations are {locally} well-posed in $\dot B^{\frac{d}{p}-1}_{p, q}(p<\infty, q<\infty)$ for large initial data and whether  the Navier-Stokes {equations} are well-posed or not for large initial data in $\dot B^{\frac{d}{p}-1}_{p, \infty}(p<\infty)$  remains open. Different from the Navier-Stokes equations, Theorem \ref{main} shows that the non-resistive MHD system \eqref{MHD} is ill-posed in $\dot B^{\frac{d}{p}-1}_{p,q}(\RR^d)\times \dot B^{\frac{d}{p}}_{p,q}(\RR^d)$ with $1\le p\le\infty, q>1$.
\end{remark}


Below we list the local well-posedness/ill-posdeness results of the non-resistive MHD system in the homogeneous Besov spaces $\dot B^{\frac{d}{p}-1}_{p,q}\times\dot B^{\frac{d}{p}}_{p,q}$.
\begin{table}[ht]
\begin{tabular}{p{3cm}|p{4cm}|p{4cm}}
\toprule[1.5pt]
\multicolumn{3}{c} {Local well-posedness/ill-posedness of the system \eqref{MHD} in $\dot B^{\frac{d}{p}-1}_{p,q}\times\dot B^{\frac{d}{p}}_{p,q}$} \\\midrule[1pt]
Results&Range&Category\\\midrule[1pt]
\cite{LTY, YLY}&{$ q=1, \,1\le p\le 2d$}& Local well-posedness\\\hline
\cite{YLY}&{$q=1, \,2d<p<\infty$}& Local existence\\\hline
\cite{B,W,Y}&{$ {q\ge1, \, p=\infty}$}& Ill-posedness\\\hline
Theorem \ref{main}&{$ q>1, \, 1\le p\le\infty$}& Ill-posedness\\\bottomrule[1.5pt]
\end{tabular}
\end{table}

As is shown in the table, our result completes the well-posedness and ill-posedness of the non-resistive MHD equations in critical homogeneous Besov spaces except for the case $2d<p<\infty, q=1$.

\section{Preliminaries}
To begin with,  we review briefly the so-called Littlewood-Paley decomposition theory introduced e.g., in \cite{Ba, C}. Suppose  $(\chi,\varphi)$ be a couple of smooth functions with values in $[0,1]$, where
$\text{supp}\,\chi\subset \big\{\xi\in\RR^d\big{|}|\xi|\le
\frac{4}{3}\big\}$ and $\text{supp}\,\varphi\subset\big\{\xi\in\RR^d\big{|}\frac{3}{4}\le|\xi|\le \frac{8}{3}\big\}$. {Moreover, } {we assume that $\varphi$ satisfies}
\[\sum_{j\in\ZZ}\varphi_j(\xi)=1,\,\, \forall \xi\in \RR^d\backslash \{0\},\qquad\text{where} \, \, \varphi_j(\xi):=\varphi(2^{-j}\xi).\]
Let us define the homogeneous localization operators as follows.
\begin{equation}\nonumber
\begin{split}
&\dot\Delta_j u=\varphi_j(D)u=2^{dj}\int_{\RR^3} {g}(2^jy)u(x-y)\,{\rm d}y,\qquad\forall j\in\ZZ,\\
&\dot S_j u=\chi(2^{-j}D)u=2^{dj}\int_{\RR^3} h(2^jy)u(x-y)\,{\rm d}y,\qquad\forall j\in\ZZ,
\end{split}
\end{equation}
where $g=\mathscr{F}^{-1}\varphi$ and $h=\mathscr{F}^{-1}\chi$. The nonhomogeneous dyadic
blocks $\Delta _j$ are defined by
\begin{align*}
&\Delta_j u=0,\,\,\text{if} \,j\le -2,\qquad \Delta_{-1}u=\chi(D) u=\int_{\RR^3} h(y)u(x-y)\,{\rm d}y,\\
&\Delta_j u=\varphi_j(D)u=2^{dj}\int_{\RR^3} g(2^jy)u(x-y)\,{\rm d}y,\qquad\forall j\ge 0.
\end{align*}
\begin{definition}[\cite {Ba} Homogeneous Besov spaces]\label{def.Be}
Let $s\in\RR$ and $1\le p,q\le\infty,$ The homogeneous Besov space $\dot B^s_{p,q}$ consists of all tempered distributions $u\in \mathcal{S}'_{h}$  such that
\begin{equation}\nonumber
\|u\|_{\dot B^s_{p,q}}\overset{\text{def}}{=}\Big{\|}(2^{js}\|\dot\Delta_j u\|_{L^p})_{j\in\ZZ}\Big{\|}_{\ell^q(\ZZ)}<\infty.
\end{equation}
\end{definition}

\begin{definition}[\cite {Ba} Nonhomogeneous Besov spaces]Let $s\in\RR$ and $1\le p,q\le \infty$. The nonhomogeneous Besov space $B^s_{p,q}$ consists of all tempered distributions $u$ such that
\begin{equation}\nonumber
\|u\|_{B^s_{p,q}}\overset{\text{def}}{=}\Big{\|}(2^{js}\|\Delta_j u\|_{L^p})_{j\in\ZZ}\Big{\|}_{\ell^q(\ZZ)}<\infty.
\end{equation}
\end{definition}
In the context of this paper, we often use the following mixed type time-spatial space.
\begin{definition}Let $T>0$, $s\in\RR$ and $(p, q)\in [1, \infty]^2$. The mixed time-spatial Besov space $\mathcal L^r_T\dot B^s_{p,q}$ consists of all  $u\in\mathcal{S}'_h$ such that
\begin{equation}\nonumber
\|u\|_{\mathcal {L}^r_T\dot B^s_{p,q}}\overset{\text{def}}{=}\Big{\|}(2^{js}\|\dot\Delta_j u\|_{L^r_TL^p})_{j\in\ZZ}\Big{\|}_{\ell^q(\ZZ)}<\infty.
\end{equation}
\end{definition}
\begin{lemma}[\cite{Ba}]\label{trans}
Let $1\le p\le p_1\le\infty$ and $s\in (-d\min\{\frac{1}{p_1}, 1-\frac{1}{p}\}, 1+\frac{d}{p_1}].$ Let $v$ be a vector field such that $\nabla v\in L^1_T(\dot  B^{\frac{d}{p_1}}_{p_1, 1}(\RR^d))$. There exists a constant $C$ depending on $p, s, p_1$ such that all solutions $f\in \mathcal L^\infty_T(\dot B^s_{p,1}(\RR^d))$ of the transport equation
\[\partial_t f+v\cdot\nabla f=g,\,\,\,\,f(0,x)=f_0(x).\]
with initial data $f_0\in \dot  B^s_{p,1}(\RR^d)$ and $g\in L^1_{\text{loc}}(\RR^+;\dot  B^s_{p,1}(\RR^d))$, we have, for $t\in[0, T]$,
\begin{align*}
\|f\|_{\mathcal L^\infty_T(\dot  B^s_{p,1})}\le e^{CV_{p_1}(t)}\Big(\|f_0\|_{ \dot B^s_{p,1}}+\int_0^t  e^{-CV_{p_1}(\tau)}\|g(\tau)\|_{ \dot B^s_{p,1}}\dd\tau\Big),
\end{align*}
where $V_{p_1}(t)=\int_0^t\|\nabla v\|_{\dot B^{\frac{d}{p_1}}_{p_1,1}(\RR^d)}\dd s$.
\end{lemma}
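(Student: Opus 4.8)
The plan is to reduce the transport equation to a family of frequency-localized equations, run an $L^p$ energy estimate on each dyadic piece, control the resulting commutators by a Bony-decomposition argument, and finally sum in $\ell^1(\ZZ)$ and close the estimate by Grönwall's inequality. First I would apply the homogeneous block $\dot\Delta_j$ to $\partial_t f+v\cdot\nabla f=g$. Writing $f_j:=\dot\Delta_j f$, this yields
\[
\partial_t f_j + v\cdot\nabla f_j = \dot\Delta_j g + R_j, \qquad R_j := [v\cdot\nabla,\ \dot\Delta_j] f ,
\]
so each $f_j$ solves a transport equation with forcing $\dot\Delta_j g+R_j$. Testing against $|f_j|^{p-2}f_j$ and integrating the convection term by parts (it contributes $-\tfrac1p\int(\Div v)|f_j|^p$, since $v$ is not assumed divergence-free), one obtains for every $p\in[1,\infty]$ the a priori bound
\[
\|f_j(t)\|_{L^p} \le \|f_j(0)\|_{L^p} + \int_0^t \Big( \tfrac1p\|\Div v\|_{L^\infty}\|f_j\|_{L^p} + \|\dot\Delta_j g\|_{L^p} + \|R_j\|_{L^p}\Big)\dd\tau ,
\]
with the convention $\tfrac1p=0$ at $p=\infty$; the endpoint $p=\infty$ is handled by evaluating along the flow of $v$ rather than by the energy identity. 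This step is routine.

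The heart of the matter is the commutator estimate
\[
2^{js}\,\|R_j\|_{L^p} \le C\, c_j \,\|\nabla v\|_{\dot B^{\frac d{p_1}}_{p_1,1}}\,\|f\|_{\dot B^s_{p,1}}, \qquad \sum_{j\in\ZZ} c_j \le 1 ,
\]
which I expect to be the main obstacle, both in the case analysis and in tracking the endpoint constraints on $s$. To prove it I would expand each product $v^k\partial_k f$ by Bony's decomposition into a paraproduct $T_{v^k}\partial_k f$, a reverse paraproduct $T_{\partial_k f}v^k$, and a remainder $R(v^k,\partial_k f)$, and commute $\dot\Delta_j$ through each. The genuinely cancelling piece is $\sum_{|j-j'|\le N_0}[\dot\Delta_j,\dot S_{j'-1}v^k]\,\partial_k\dot\Delta_{j'}f$: here the first-order commutator bound $\|[\dot\Delta_j,a]\,w\|_{L^p}\lesssim 2^{-j}\|\nabla a\|_{L^\infty}\|w\|_{L^p}$ converts the derivative falling on $v$ into $\nabla v$, measured in $\dot B^{\frac d{p_1}}_{p_1,1}\hookrightarrow L^\infty$. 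The remaining reverse-paraproduct and remainder contributions carry no cancellation and are estimated directly by the continuity of paraproduct and remainder operators on Besov spaces, again after transferring one derivative onto $v$. Careful bookkeeping of the Hölder exponents (where the hypothesis $p\le p_1$ and the embedding $\dot B^{\frac d{p_1}}_{p_1,1}\hookrightarrow L^\infty$ enter) produces the summable sequence $(c_j)\in\ell^1$, and the precise admissible range $s\in(-d\min\{\tfrac1{p_1},1-\tfrac1p\},\,1+\tfrac d{p_1}]$ emerges as exactly the set of exponents for which all three dyadic sums converge: the upper endpoint $s\le 1+\tfrac d{p_1}$ from the reverse-paraproduct term, and the lower bound from the remainder.

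Finally I would multiply the $f_j$ bound by $2^{js}$, take $\sup_{\tau\le t}$, and sum over $j\in\ZZ$ in $\ell^1$. Absorbing $\tfrac1p\|\Div v\|_{L^\infty}\le C\|\nabla v\|_{\dot B^{\frac d{p_1}}_{p_1,1}}$ via the same embedding and inserting the commutator estimate, the summed inequality becomes
\[
\|f\|_{\mathcal L^\infty_t(\dot B^s_{p,1})} \le \|f_0\|_{\dot B^s_{p,1}} + \int_0^t\Big(C\|\nabla v(\tau)\|_{\dot B^{\frac d{p_1}}_{p_1,1}}\,\|f\|_{\mathcal L^\infty_\tau(\dot B^s_{p,1})} + \|g(\tau)\|_{\dot B^s_{p,1}}\Big)\dd\tau .
\]
Setting $V_{p_1}(t)=\int_0^t\|\nabla v\|_{\dot B^{\frac d{p_1}}_{p_1,1}}\dd s$ and applying Grönwall's lemma to $e^{-CV_{p_1}(t)}\|f\|_{\mathcal L^\infty_t(\dot B^s_{p,1})}$ gives precisely the claimed bound. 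A minor technical point is the time-regularity needed to differentiate $\|f_j\|_{L^p}$ and to justify the integration by parts; this is resolved by mollifying $v$ and $f$, establishing the estimate for the smooth approximations, and passing to the limit, which also covers the $p=\infty$ endpoint through the characteristic (Duhamel-along-the-flow) formulation.
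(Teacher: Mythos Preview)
The paper does not give its own proof of this lemma: it is quoted verbatim from \cite{Ba} (Bahouri--Chemin--Danchin, Theorem~3.14 and Lemma~2.100), and no argument appears here. Your proposal reproduces exactly the standard proof from that reference---localize by $\dot\Delta_j$, run an $L^p$ energy estimate on each block, control the commutator $[v\cdot\nabla,\dot\Delta_j]f$ via Bony's decomposition with the first-order commutator lemma on the paraproduct piece, sum in $\ell^1$, and close by Gr\"onwall---so there is nothing to contrast.
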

\begin{lemma}[\cite{Dan05}]\label{heat}Let $s\in \RR$ and $1\le r_1, r_2, p, q\le \infty$ with $r_2\le r_1$. Consider the heat equation
\begin{align*}
\partial_t u-\Delta u=f,\qquad
  u(0,x)=u_0(x).
  \end{align*}
Assume that $u_0\in \dot B^s_{p,q}(\RR^d)$ and $f\in \mathcal{L}^{r_2}_T( \dot B^{s-2+\frac{2}{r_2}}_{p,q}(\RR^d))$. Then the above equation has a unique solution $u\in \mathcal{L}^{r_1}_T( \dot B^{s+\frac{2}{r_1}}_{p,q}(\RR^d))$
satisfying
\[\|u\|_{\mathcal{L}^{r_1}_T( \dot B^{s+\frac{2}{r_1}}_{p,q}(\RR^d))}\le C\big(\|u_0\|_{\dot B^s_{p,q}(\RR^d)}+\|f\|_{\mathcal{L}^{r_2}_T( \dot B^{s-2+\frac{2}{r_2}}_{p,q}(\RR^d))}\big).\]
\end{lemma}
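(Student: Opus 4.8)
The plan is to localize the equation in frequency, solve each dyadic block by Duhamel's formula, establish a single-block heat-smoothing estimate, and then reassemble using Young's inequality in time together with the definition of the Chemin--Lerner norm. First I would apply the homogeneous block $\dot\Delta_j$ to the equation. Since $\dot\Delta_j$ is a Fourier multiplier it commutes with both $\partial_t$ and $\Delta$, so $u_j:=\dot\Delta_j u$ solves $\partial_t u_j-\Delta u_j=\dot\Delta_j f$ with $u_j(0)=\dot\Delta_j u_0$, and Duhamel's formula gives
\[
u_j(t)=e^{t\Delta}\dot\Delta_j u_0+\int_0^t e^{(t-\tau)\Delta}\dot\Delta_j f(\tau)\,\dd\tau.
\]

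The one genuinely analytic ingredient is the frequency-localized smoothing estimate: there exist $c,C>0$ such that for every $g_j$ with Fourier support in the annulus $\{|\xi|\sim 2^j\}$,
\[
\big\|e^{t\Delta}g_j\big\|_{L^p}\le C e^{-c2^{2j}t}\,\|g_j\|_{L^p}.
\]
I would prove this by writing $e^{t\Delta}\dot\Delta_j$ as convolution with the kernel $K_{j,t}=\mathscr{F}^{-1}\big(e^{-t|\xi|^2}\varphi(2^{-j}\xi)\big)$, rescaling $\xi=2^j\eta$ to reduce to frequency $\sim 1$ with time parameter $s=t2^{2j}$, and bounding the resulting $L^1$ norm by $Ce^{-cs}$ via integration by parts: on $\mathrm{supp}\,\varphi$ one has $|\eta|\ge\frac34$, so $e^{-s|\eta|^2}$ and all its $\eta$-derivatives decay like $e^{-cs}$ up to polynomial factors in $s$ that are absorbed by slightly shrinking $c$. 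Young's inequality in space then yields the displayed bound (this is the standard smoothing lemma of Bahouri--Chemin--Danchin).

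Inserting this bound into Duhamel's formula and taking the $L^{r_1}_T$ norm in time, the initial-data term contributes $\|e^{-c2^{2j}\cdot}\|_{L^{r_1}(0,T)}\|\dot\Delta_j u_0\|_{L^p}\le C2^{-2j/r_1}\|\dot\Delta_j u_0\|_{L^p}$, while the forcing term is a time-convolution of $\|\dot\Delta_j f(\cdot)\|_{L^p}$ against the kernel $e^{-c2^{2j}t}\mathbf{1}_{t>0}$. Here is precisely where the hypothesis $r_2\le r_1$ enters: Young's convolution inequality with $1+\tfrac1{r_1}=\tfrac1a+\tfrac1{r_2}$ forces $\tfrac1a=1-\big(\tfrac1{r_2}-\tfrac1{r_1}\big)$, which lies in $[0,1]$ exactly because $r_2\le r_1$; since $\|e^{-c2^{2j}\cdot}\|_{L^a}\sim 2^{-2j/a}$, this gives
\[
\Big\|\int_0^{\cdot} e^{-c2^{2j}(\cdot-\tau)}\|\dot\Delta_j f(\tau)\|_{L^p}\,\dd\tau\Big\|_{L^{r_1}_T}\le C2^{-2j(1+1/r_1-1/r_2)}\,\|\dot\Delta_j f\|_{L^{r_2}_T L^p}.
\]

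Finally I would multiply both estimates by $2^{j(s+2/r_1)}$ and take the $\ell^q(\ZZ)$ norm. The initial-data exponent collapses to $2^{js}$, producing $\|u_0\|_{\dot B^s_{p,q}}$, and the forcing exponent collapses to $2^{j(s-2+2/r_2)}$, producing $\|f\|_{\mathcal L^{r_2}_T\dot B^{s-2+2/r_2}_{p,q}}$, which is exactly the asserted bound. Uniqueness is then immediate: the difference of two solutions has zero data and zero forcing, so the same estimate bounds its norm by zero. I expect the only real obstacle to be the book-keeping of exponents in the Young step and verifying admissibility of $a$; the hypothesis $r_2\le r_1$ is used exactly once, to keep $\tfrac1a$ nonnegative, and everything else is a routine consequence of the smoothing estimate.
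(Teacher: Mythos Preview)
Your proof is correct and is the standard argument for this smoothing estimate. Note, however, that the paper does not give its own proof of this lemma: it is simply quoted from the reference \cite{Dan05} (and the same argument appears as Theorem~2.34 in \cite{Ba}). So there is no ``paper's proof'' to compare against; your write-up reproduces exactly the classical route---localize in frequency, apply the dyadic heat-kernel bound $\|e^{t\Delta}\dot\Delta_j g\|_{L^p}\le Ce^{-c2^{2j}t}\|\dot\Delta_j g\|_{L^p}$, then use Young's inequality in time with the hypothesis $r_2\le r_1$ to make the convolution exponent admissible. One tiny quibble: you say $\tfrac{1}{a}=1-\big(\tfrac{1}{r_2}-\tfrac{1}{r_1}\big)$ lies in $[0,1]$ ``exactly because $r_2\le r_1$,'' but only the upper bound $\tfrac{1}{a}\le 1$ uses this hypothesis; the lower bound $\tfrac{1}{a}\ge 0$ is automatic since $\tfrac{1}{r_2}\le 1\le 1+\tfrac{1}{r_1}$.
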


\begin{lemma}[\cite{Ba}]\label{Phi}Assume that  $u$ is a smooth vector field. And $\Phi(t,x)$ satisfies
\begin{equation}\label{est-Phi}
\Phi(t,x)=x+\int_0^t u(s, \Phi(s, x))\dd s.
\end{equation}
Then, for all $t\in \RR^{+}$, the flow $\Phi(t,x)$ is a $C^1$ diffeomorphism over $\RR^d$, and we have
\[\|D\Phi^{\pm}(t)\|_{L^\infty_x}\le \exp\big(\int_0^t\|D u(s)\|_{L^\infty_x}\dd s\big).\]
\end{lemma}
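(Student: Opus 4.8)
The plan is to treat this as a classical statement about the ODE flow of a vector field, so I would first record existence, uniqueness and smoothness of $\Phi$, and then isolate the quantitative Jacobian bound, which is the real content. Writing \eqref{est-Phi} in differential form, $\Phi$ solves $\partial_t\Phi(t,x)=u(t,\Phi(t,x))$ with $\Phi(0,x)=x$. Since $u$ is smooth (hence locally Lipschitz in $x$) and $\int_0^t\|Du(s)\|_{L^\infty_x}\dd s<\infty$ supplies the integrable-in-time Lipschitz control needed for global solvability, the Cauchy--Lipschitz (Picard--Lindel\"of) theorem produces a unique global solution that is $C^1$ (indeed as regular as $u$) in the space variable $x$. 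The inverse of $\Phi_t:=\Phi(t,\cdot)$ is obtained by integrating the same ODE backward in time; the two-parameter flow property $\Phi(t,s,\cdot)\circ\Phi(s,t,\cdot)=\mathrm{id}$ shows that $\Phi_t$ is a bijection with $C^1$ inverse, hence a $C^1$ diffeomorphism of $\RR^d$. Here I read $D\Phi^{+}(t)=D\Phi_t$ and $D\Phi^{-}(t)=D(\Phi_t^{-1})$.

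For the forward bound I would differentiate the flow equation in $x$ to obtain the linearized (variational) equation for the Jacobian matrix $D\Phi(t,x)$:
\begin{equation}\nonumber
\partial_t D\Phi(t,x)=Du(t,\Phi(t,x))\,D\Phi(t,x),\qquad D\Phi(0,x)=\mathrm{Id}.
\end{equation}
Integrating in time and taking a matrix norm gives
\begin{equation}\nonumber
|D\Phi(t,x)|\le 1+\int_0^t\|Du(s)\|_{L^\infty_x}\,|D\Phi(s,x)|\,\dd s,
\end{equation}
where I used $|Du(s,\Phi(s,x))|\le\|Du(s)\|_{L^\infty_x}$. Gronwall's inequality then yields $|D\Phi(t,x)|\le\exp\big(\int_0^t\|Du(s)\|_{L^\infty_x}\dd s\big)$ uniformly in $x$, which is the claimed estimate for $D\Phi^{+}$.

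For the inverse I would avoid estimating $\Phi_t^{-1}$ directly and instead control $J(t,x)^{-1}$, where $J:=D\Phi$. Differentiating $J J^{-1}=\mathrm{Id}$ in $t$ and substituting the variational equation gives $\partial_t J^{-1}=-J^{-1}Du(t,\Phi(t,x))$ with $J^{-1}(0,x)=\mathrm{Id}$, so the same Gronwall argument produces $|J(t,x)^{-1}|\le\exp\big(\int_0^t\|Du(s)\|_{L^\infty_x}\dd s\big)$. The inverse function theorem applied to $\Phi_t$ gives $D(\Phi_t^{-1})(x)=\big[D\Phi_t(\Phi_t^{-1}(x))\big]^{-1}=J(t,\Phi_t^{-1}(x))^{-1}$; taking the supremum over $x$ and using that $\Phi_t^{-1}$ is onto, this supremum equals that of $|J^{-1}|$ over all base points, so $\|D\Phi^{-}(t)\|_{L^\infty_x}$ obeys the same exponential bound.

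The only genuine subtlety is the qualitative input rather than the estimate itself: justifying differentiation of \eqref{est-Phi} under the integral sign, i.e. the $C^1$ dependence of the flow on $x$, together with global existence of trajectories. Both follow from the standard regularity theory for ODEs once $u$ is smooth with $\int_0^t\|Du(s)\|_{L^\infty_x}\dd s$ finite; the two Gronwall steps are then routine. Since this is a classical fact (cf. \cite{Ba}), I would keep the write-up brief, emphasising the variational equation and its counterpart for $J^{-1}$ as the two parallel mechanisms producing the forward and backward bounds.
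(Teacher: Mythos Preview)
Your argument is correct and is exactly the standard proof: the variational equation $\partial_t D\Phi = Du\,D\Phi$ plus Gronwall for the forward bound, and the companion equation $\partial_t J^{-1} = -J^{-1}Du$ together with $D(\Phi_t^{-1})=J^{-1}\circ\Phi_t^{-1}$ for the backward bound. Note that the paper does not actually prove this lemma---it is stated with a citation to \cite{Ba} and no proof is given---so there is nothing to compare; your write-up is essentially the argument one finds in that reference.
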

{\begin{lemma}[\cite{XZ}]\label{uphi}
Let $u\in \mathcal{S}(\RR^d)$ with $\Div u=0$. The flow $\Phi$ is defined by $u$ in \eqref{est-Phi}.  Then $\Phi$ and the inverse $\Phi^{-}$ are $C^1$ measure-preserving global diffeomorphism over $\RR^d$. There holds that
\begin{align*}
\|u\circ\Phi\|_{\dot B^s_{p,q}}\le C\exp\Big(\int_0^t\|D u(s)\|_{L^\infty_x}\dd s\Big)\|u\|_{\dot B^s_{p,q}}, \quad s\in(-1,1), \quad p, q\in[1,\infty]^2.
\end{align*}
\end{lemma}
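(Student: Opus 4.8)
The plan is to split the statement into two logically independent parts: the geometric/ODE part (that $\Phi$ and $\Phi^{-}$ are $C^1$ measure-preserving global diffeomorphisms) and the analytic part (the composition inequality on $\dot B^s_{p,q}$ for $|s|<1$). For the first part, since $u\in\mathcal S(\RR^d)$ is smooth with all derivatives bounded, the Cauchy--Lipschitz theorem applied to \eqref{est-Phi} produces a unique flow $\Phi(t,\cdot)$ that is smooth in $x$ and $C^1$ in $t$; global existence is guaranteed by the a priori bound $|\Phi(t,x)|\le|x|+\int_0^t\|u(s)\|_{L^\infty}\dd s$. Lemma \ref{Phi} already supplies that $\Phi(t,\cdot)$ is a $C^1$ diffeomorphism with $\|D\Phi^{\pm}(t)\|_{L^\infty_x}\le\exp\bigl(\int_0^t\|Du(s)\|_{L^\infty_x}\dd s\bigr)$, so all that remains here is measure preservation. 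To obtain it I would set $J(t,x)=\det D_x\Phi(t,x)$ and use Liouville's formula $\partial_t J=(\Div u)(t,\Phi(t,x))\,J$, which together with $\Div u=0$ and $J(0,\cdot)\equiv1$ forces $J\equiv1$; hence $\Phi(t,\cdot)$ preserves Lebesgue measure, and $\det D\Phi^{-}=1$ as well.

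For the composition inequality write $\psi=\Phi(t,\cdot)$, a volume-preserving bi-Lipschitz map with $\mathrm{Lip}(\psi),\mathrm{Lip}(\psi^{-1})\le L:=\exp\bigl(\int_0^t\|Du(s)\|_{L^\infty_x}\dd s\bigr)$. First I would treat $0<s<1$ through the finite-difference characterization $\|f\|_{\dot B^s_{p,q}}\sim\bigl\||h|^{-s}\|\Delta_h f\|_{L^p}\bigr\|_{L^q(\dd h/|h|^d)}$, where $\Delta_h f(x)=f(x+h)-f(x)$. Changing variables $x\mapsto\psi^{-1}(z)$, which is measure-preserving, rewrites $\|\Delta_h(f\circ\psi)\|_{L^p}^p$ as $\int_{\RR^d}|f(z+g(z,h))-f(z)|^p\,\dd z$ with the $z$-dependent shift $g(z,h)=\psi(\psi^{-1}(z)+h)-z$, which satisfies $L^{-1}|h|\le|g(z,h)|\le L|h|$ and, for fixed $z$, $\dd h=\dd g$ (again by volume preservation, since $\det D\psi=1$). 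Substituting $a=g(z,h)$ and using these two-sided comparisons converts the composed Besov quantity back into $\|f\|_{\dot B^s_{p,q}}$ up to a power of $L$.

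For $-1<s<0$ I would argue by duality: using $\langle f\circ\psi,\phi\rangle=\langle f,\phi\circ\psi^{-1}\rangle$ (valid by measure preservation) together with the identification $\dot B^s_{p,q}=(\dot B^{-s}_{p',q'})'$, the desired estimate follows from the case $-s\in(0,1)$ already proved, applied to $\psi^{-1}$, whose bi-Lipschitz constants are likewise controlled by $L$. The endpoint $s=0$ then follows by real interpolation between an index in $(0,1)$ and one in $(-1,0)$, since composition with $\psi$ is bounded on both and $\dot B^0_{p,q}$ is an interpolation space; the base level is consistent because $\|f\circ\psi\|_{L^p}=\|f\|_{L^p}$. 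In every regime the operator norm is a fixed power of $L=\exp\bigl(\int_0^t\|Du(s)\|_{L^\infty_x}\dd s\bigr)$, which is precisely what the stated bound records.

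The hard part will be the analytic composition step in the second paragraph: unlike a pure translation, $\psi$ generates the $z$-dependent shift $g(z,h)$, so one cannot substitute directly inside the nested $L^q(\dd h/|h|^d)$ of $L^p(\dd z)$ norm. The decisive use of volume preservation (to obtain $\dd h=\dd g$ for fixed $z$ and the two-sided bound on $|g|$) must be combined with Minkowski's integral inequality to interchange the $z$- and $h$-integrations, and this interchange has to be carried out separately in the regimes $q\ge p$ and $q\le p$. A secondary point requiring care is the duality and interpolation bookkeeping at and below $s=0$, where the restriction $|s|<1$ is exactly the condition that keeps the finite-difference characterization — and hence the whole argument — available.
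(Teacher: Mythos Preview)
Your outline takes a genuinely different route from the paper. The paper never uses finite differences, duality, or interpolation; it works entirely on the Littlewood--Paley side by quoting a block-level estimate from \cite{Ba} (Lemma~2.7 in Chapter~2),
\[
\|\dot\Delta_j\bigl((\dot\Delta_k f)\circ\Phi\bigr)\|_{L^p}\le C\,\|\dot\Delta_k f\|_{L^p}\min\bigl\{2^{k-j}\|D\Phi^{-}\|_{L^\infty},\;2^{j-k}\|D\Phi\|_{L^\infty}\bigr\},
\]
then summing over $k$ (split at $k=j$) and taking $\ell^q$. The restriction $|s|<1$ is exactly what makes both geometric tails $2^{(k-j)(1-s)}$ and $2^{(j-k)(1+s)}$ summable, so one argument covers every $(p,q,s)$ at once and produces the constant $C(\|D\Phi\|_{L^\infty}+\|D\Phi^{-}\|_{L^\infty})$, which becomes the stated exponential via Lemma~\ref{Phi}.

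In your approach, the step you correctly flag as ``the hard part'' does not close as written. After the change of variables you must compare $\|F\|_{L^q_h(L^p_z)}$, where $F(z,h)=|h|^{-s-d/q}\bigl(f(z+g(z,h))-f(z)\bigr)$, with $\|G\|_{L^q_a(L^p_z)}$, where $G(z,a)=|a|^{-s-d/q}\bigl(f(z+a)-f(z)\bigr)$. Minkowski lets you pass from $L^q_h L^p_z$ to $L^p_z L^q_h$ only when $q\ge p$; after substituting $a=g(z,h)$ for fixed $z$ you reach $\|G\|_{L^p_z(L^q_a)}$, but to land on the target $\|G\|_{L^q_a(L^p_z)}$ you would need Minkowski in the opposite direction, which requires $p\ge q$. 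For $q\le p$ the very first swap already goes the wrong way. Thus the scheme works only at $p=q$. A correct repair is to use the ball-averaged difference characterization of $\dot B^s_{p,q}$ (integrate $\|\Delta_h f\|_{L^p}^p$ over $|h|\le t$ \emph{before} raising to the power $q/p$): the inner integral is then a genuine $L^p$ double integral in $(z,h)$, Fubini suffices in place of Minkowski, and your substitution $a=g(z,h)$ with $\dd h=\dd a$ goes through for each fixed $z$. Even so, this route yields a constant of the form $CL^{s+d/p}$ rather than the linear $CL$ of the lemma; the paper's Littlewood--Paley argument is what delivers the sharper dependence actually stated.
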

\begin{proof}Using Lemma 2.7 in Chapter 2 of \cite{Ba}, we infer that for $1\le p\le\infty$ and any $j,k\in\ZZ$,
\begin{align*}
\|\dot\Delta_j\big((\dot\Delta_k u)\circ\Phi\big)\|_{L^p}\le& C\|\dot\Delta_k u\|_{L^p}\min\Big\{2^{k-j}\|D\Phi^{-}\|_{L^\infty}, 2^{j-k}\|D\Phi\|_{L^\infty}\Big\}\\
\le&C\|\dot\Delta_k u\|_{L^p}(\|D\Phi\|_{L^\infty}+\|D\Phi^-\|_{L^\infty})\min\big\{2^{k-j}, 2^{j-k}\big\}.
\end{align*}
Therefore, for $s\in(-1,1)$ and $u\in\dot B^{s}_{p,q}(\RR^d)$, we have
\begin{align*}
&2^{js}\|\dot\Delta_j(u\circ\Phi)\|_{L^p}\\
\le& 2^{js}\Big(\sum_{k\le j}\|\dot\Delta_j\big((\dot\Delta_k u)\circ\Phi\big)\|_{L^p}+\sum_{k> j}\|\dot\Delta_j\big((\dot\Delta_k u)\circ\Phi\big)\|_{L^p}\Big)\\
\le&C(\|D\Phi\|_{L^\infty}+\|D\Phi^{-}\|_{L^\infty})(\sum_{k\le j}2^{k-j}+\sum_{k> j}2^{j-k})2^{js}\|\dot\Delta_k u\|_{L^p}\\
=&C(\|D\Phi\|_{L^\infty}+\|D\Phi^{-}\|_{L^\infty})(\sum_{k\le j}2^{(k-j)(1-s)}2^{ks}\|\dot\Delta_k u\|_{L^p}+\sum_{k> j}2^{(j-k)(1+s)}2^{ks}\|\dot\Delta_k u\|_{L^p}).
\end{align*}
Then taking $\ell^q$ norm on both sides of the above inequality, thanks to $s\in (-1,1)$, one obtains that
\begin{align*}
\|u\circ\Phi\|_{\dot B^s_{p,q}}\le C(\|D\Phi\|_{L^\infty}+\|D\Phi^{-}\|_{L^\infty})\|u\|_{\dot B^s_{p,q}}.
\end{align*}
Combining with Lemma \ref{Phi}, we complete the proof of this lemma.
\end{proof}}
\section{Proof of Theorem \ref{main}}
{First of all, we introduce the parameters in this section.  Let $N$ be a large enough integer defined later.  For any $q>1$,  $\alpha$ is a constant satisfies that  $\frac{1}{q}<\alpha<1$.}

Before constructing initial data $(u_0, b_0)$, we introduce two smooth functions $\widehat\psi(\xi), \widehat\phi(\xi)$ satisfies that
\begin{small}
\begin{equation}\label{supp psi}
\left\{ \aligned
 &\text{supp}\,{\widehat\psi}(\xi)=\big\{\xi\in\RR^d\big|\,\,1\le \xi_2\le 2, (\ln\ln N)^{-1}\le \xi_i\le 2(\ln\ln N)^{-1}, i\neq 2\big\}:=A,\\
&\widehat\psi(\xi)\equiv1, \,\forall\xi\in\big\{\xi\in\RR^d\big|\,\,\tfrac{5}{4}\le \xi_2\le \tfrac{7}{4}, \tfrac{5}{4}(\ln\ln N)^{-1}\le \xi_i\le \tfrac{7}{4}(\ln\ln N)^{-1}, i\neq 2\big\}:=B,
\endaligned
\right.
\end{equation}
\end{small}
and
\begin{small}
\begin{equation}\label{supp phi}
\left\{ \aligned
 &\text{supp}\,{\widehat\phi}(\xi)=\big\{\xi\in\RR^d\big|\,\,1\le \xi_1\le 2, (\ln\ln N)^{-1}\le \xi_i\le 2(\ln\ln N)^{-1}, i\ge 2\big\}:=C,\\
&\widehat\phi(\xi)\equiv1, \,\forall\xi\in\big\{\xi\in\RR^d\big|\,\,\tfrac{5}{4}\le \xi_1\le \tfrac{7}{4}, \tfrac{5}{4}(\ln\ln N)^{-1}\le \xi_i\le \tfrac{7}{4}(\ln\ln N)^{-1},  i\ge 2 \big\}:=D.
\endaligned
\right.
\end{equation}
\end{small}
We construct initial data $(u_0, b_0)$ as follows:
\begin{equation}\label{initialdata}
\left\{ \aligned
 u_0=&\Big(-\mathscr{F}^{-1}\Big(\frac{\xi_2}{\xi_1}\frac{2^N\widehat{\psi}(2^{-N}\xi)}{2^{Nd}(\ln\ln N)^{3+d}}\Big), \mathscr{F}^{-1}\Big(\frac{2^N\widehat{\psi}(2^{-N}\xi)}{2^{Nd}(\ln\ln N)^{3+d}}\Big), 0, \dots, 0\Big),\\
b_0=&\Big(-\sum_{\frac{N}{2}\le j\le\frac{4N}{5}}\mathscr{F}^{-1}\Big(\frac{\xi_2}{\xi_1}\frac{\widehat{\phi}(2^{-j}\xi)}{2^{jd}j^\alpha}\Big), \sum_{\frac{N}{2}\le j\le\frac{4N}{5}}\mathscr{F}^{-1}\Big(\frac{\widehat{\phi}(2^{-j}\xi)}{2^{jd}j^\alpha}\Big), 0, \dots, 0\Big),
\endaligned
\right.
\end{equation}
where $\mathscr{F}^{-1}$ denotes the inverse Fourier transformation. {Actually, the initial data $(u_0, b_0)$ depends on $N$ from the above definition.}   To begin with, we need to verify that the initial data $(u_0, b_0)$ is small in $\dot B^{\frac{d}{p}-1}_{p,q}(\RR^d)\times\dot  B^{\frac{d}{p}}_{p,q}(\RR^d)$ for all $q>1$.
\subsection{Estimates of initial data $(u_0, b_0)$.}
By the definition of $(u_0, b_0)$ in \eqref{initialdata}, we have
\begin{equation}\label{initaldefine}
\left\{ \aligned
  &\widehat u_0(\xi)=\Big(-\frac{\xi_2}{\xi_1}\frac{2^N\widehat{\psi}(2^{-N}\xi)}{2^{Nd}(\ln\ln N)^{3+d}}, \frac{2^N\widehat{\psi}(2^{-N}\xi)}{2^{Nd}(\ln\ln N)^{3+d}}, 0, \dots, 0\Big),\\
&\widehat b_0(\xi)=\Big(-\frac{\xi_2}{\xi_1}\sum_{\frac{N}{2}\le j\le\frac{4N}{5}}\frac{\widehat{\phi}(2^{-j}\xi)}{2^{jd}j^\alpha}, \sum_{\frac{N}{2}\le j\le\frac{4N}{5}}\frac{\widehat{\phi}(2^{-j}\xi)}{2^{jd}j^\alpha}, 0, \dots, 0\Big).
\endaligned
\right.
\end{equation}
It is easy to verify that $\Div u_0=\Div b_0=0$. Assume that $\widetilde{\psi}(\xi)\in C^\infty_c(\RR^d)$, $\widetilde{\psi}(\xi)\equiv 1$ on $A$ and that {$\widetilde{\psi}$ is supported} in an annulus $\widetilde{A}$, where
\[\widetilde{A}=\Big\{\xi\in\RR^d\big|\,\,\tfrac{1}{2}\le \xi_2\le 3, \tfrac{1}{2}(\ln\ln N)^{-1}\le \xi_i\le 3(\ln\ln N)^{-1}, i\neq 2\Big\}.\]
{Moreover, $\widetilde{\psi}$ satisfies that $\|D^k\widetilde{\psi}\|_{L^{\infty}}\le C(\ln\ln N)^k, \forall k\ge 0.$}

Noting the { $\text{supp}\,\, \widehat\psi(\xi)  \subset\widetilde{A}$, we obtain }
\begin{align*}
&\widehat u_0(\xi)=\frac{2^N}{2^{Nd}(\ln\ln N)^{3+d}}\big(-\tfrac{\xi_2}{\xi_1}{\widetilde{\psi}}(2^{-N}\xi)\widehat{\psi}(2^{-N}\xi), \widehat{\psi}(2^{-N}\xi), 0, \dots, 0\big).
\end{align*}
For $u^{1}_0$, we have $u^{1}_0=\frac{2^N}{(\ln\ln N)^{3+d}}K\ast \psi(2^{N}\cdot)$, where
\begin{align*} K(x)=&-(2\pi)^{-\frac{d}{2}}\int_{\RR^d}\tfrac{\xi_2}{\xi_1}\widetilde{\psi}(2^{-N}\xi)e^{ix\cdot\xi}\dd\xi
=-(2\pi)^{-\frac{d}{2}}2^{Nd}\int_{\RR^d}\tfrac{\xi_2}{\xi_1}\widetilde{\psi}(\xi)e^{i2^Nx\cdot\xi}\dd\xi.
\end{align*}
Let $M=\lfloor 1+\frac{d}{2}\rfloor$\footnote{$\lfloor x\rfloor$ denotes the floor function.}, we have
\begin{align*}
(1+|2^Nx|^2)^M |K(x)|=&-(2\pi)^{-\frac{d}{2}}2^{Nd}\big|\int_{\RR^d}\big(({\rm Id}-\Delta_{\xi})^Me^{i2^Nx\cdot\xi}\big)\tfrac{\xi_2}{\xi_1}\widetilde{\psi}(\xi)\dd\xi\big|\\
=&-(2\pi)^{-\frac{d}{2}}2^{Nd}\big|\int_{\RR^d}\big(({\rm Id}-\Delta_{\xi})^M\tfrac{\xi_2}{\xi_1}\widetilde{\psi}(\xi)\big)e^{i2^Nx\cdot\xi}\dd\xi\big|\\
=&-(2\pi)^{-\frac{d}{2}}2^{Nd}\big|\sum_{|\alpha|+|\beta|\le 2M}c_{\alpha, \beta}\int_{\widetilde{A}}e^{i2^Nx\cdot\xi}\partial^{\alpha}\widetilde{\psi}(\xi)\Big(\partial^{\beta}\frac{\xi_2}{\xi_1}\Big)\dd\xi\big|\\
\le&C2^{Nd}\sum_{|\alpha|+|\beta|\le 2M}c_{\alpha, \beta}(\ln\ln N)^{|\alpha|+|\beta|+1-(d-1)}\\
\le&C2^{Nd}(\ln\ln N)^{2M}.
\end{align*}
Due to $M>\tfrac{d}{2}$, we can infer from the above inequality that
\begin{align*}
\int_{\RR^d}|K(x)|\dd x\le& C(\ln\ln N)^{2M}2^{Nd}\int_{\RR^d}(1+|2^Nx|^2)^{-M}\dd x\\
\le& C(\ln\ln N)^{2M}\int_{\RR^d}(1+|x|^2)^{-M}\dd x\\
\le& C(\ln\ln N)^{2+d}.
\end{align*}
Therefore, with the aid of Young's inequality, one has
\begin{align}
\|u^{1}_0\|_{\dot B^{\frac{d}{p}-1}_{p,q}}&\le \frac{C2^N}{(\ln\ln N)^{3+d}}\|K\|_{L^1}\|\psi(2^N\cdot)\|_{\dot B^{\frac{d}{p}-1}_{p,q}}\label{initial1}\\
&\le\frac{C(\ln\ln N)^{2+d}2^N}{(\ln\ln N)^{3+d}}\|\psi(2^N\cdot)\|_{ B^{\frac{d}{p}-1}_{p,q}}\le \frac{C}{\ln \ln N},\notag\\
\|u^{2}_0\|_{ \dot B^{\frac{d}{p}-1}_{p,q}}&= \frac{2^N}{(\ln\ln N)^{3+d}}\|\psi(2^N\cdot)\|_{ B^{\frac{d}{p}-1}_{p,q}}\le\frac{C}{(\ln\ln N)^{3+d}}\label{initial11}.
\end{align}
In terms of $b_0$, noting the support of $\widehat{\phi}(\xi)$, it is easy to verify that $\frac{\xi_2}{\xi_1}\sim (\ln \ln N)^{-1}$. In the same way as estimating of $\|u_0\|_{B^{\frac{d}{p}-1}_{p,q}}$, we obtain that
\begin{align}\label{initial2}
&\|b_0\|_{\dot B^{\frac{d}{p}}_{p,q}}\sim\Big(\sum_{\frac{N}{2}\le j\le\frac{4N}{5}}2^{\frac{d}{p}jq}\frac{\|\phi(2^j x)\|^q_{L^p}}{j^{\alpha q}}\Big)^{\frac{1}{q}} \sim N^{\frac{1}{q}-\alpha}.
\end{align}
Similarly, it is easy to verify that for $N$ large enough and all $1\le p\le\infty$, we have
\begin{equation}\label{ini}
\left\{ \aligned
  &\|u_0\|_{\dot B^{\frac{d}{p}-1}_{p,1}}\le C(\ln\ln N)^{-1},\,\,\|u_0\|_{ \dot B^{\frac{d}{p}}_{p,1}}\le 2^{N},\,\,\,\,\,\,\,\|u_0\|_{\dot  B^{\frac{d}{p}+1}_{p,1}}\le  2^{2N},\\
&\|b_0\|_{\dot B^{\frac{d}{p}}_{p,1}}\le N^{1-\alpha},\,\,\quad\quad\quad\quad\|b_0\|_{\dot  B^{\frac{d}{p}+1}_{p,1}}\le  2^N, \,\,\,\,\|b_0\|_{ \dot B^{\frac{d}{p}+2}_{p,1}}\le  2^{2N}.
\endaligned
\right.
\end{equation}
\subsection{Local well-posedness {for $(u_0,b_0)$ with the form of  \eqref{initaldefine}}.}
{We begin to establish a {locally} well-posed  result for the system~\eqref{MHD} with the initial data constructed in \eqref{initaldefine}.}
\begin{proposition}\label{localwell}Let initial data $( u_0, b_0)$ be defined by \eqref{initaldefine}. Given $1\le r\le 2d$, there exist  constants $C_0$ and $N_0$ such that for $N>N_0$ and $T=(\ln N)^{-1}2^{-2N}$, the system~\eqref{MHD} has a unique local solution $(u, b)$ associated with initial data $(u_0, b_0)$ satisfying
\begin{align*}
&u\in C([0, T], \B\cap \Bpo)\cap \mathcal L^{1}([0, T], \Bpo\cap \Bppp),\\
&b\in C([0, T], \Bp\cap \Bpp),
\end{align*}
and the following estimates hold for all $t\le T$:
\begin{align}
&\|u\|_{\mathcal  L^\infty_{t}(\B)}+\|u\|_{\mathcal L^1_{t}(\Bpo)}\le 2C_0(\ln\ln N)^{-1},\label{u-1}\\
&\|u\|_{\mathcal  L^\infty_{t}(\Bp)}+\|u\|_{\mathcal L^1_{t}(\Bpp)}\le 2C_02^N,\label{u}\\
&\|u\|_{\mathcal  L^\infty_{t}(\Bpo)}+\|u\|_{\mathcal L^1_{t}(\Bppp)}\le 2C_02^{2N},\label{u1}\\
&\|b\|_{\mathcal  L^\infty_{t}(\Bp)}\le 2C_0N^{1-\alpha}, \label{b}\\
&\|b\|_{\mathcal  L^\infty_{t}(\Bpo)}\le 2C^3_0N^{1-\alpha}2^N,\label{b1}\\
&\|b\|_{\mathcal  L^\infty_{t}(\Bpp)}\le 2C^4_0N^{1-\alpha}2^{2N}. \label{b2}
\end{align}
\end{proposition}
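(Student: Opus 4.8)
\textbf{Proof strategy for Proposition \ref{localwell}.}

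The plan is to run a standard iteration scheme for \eqref{MHD} and track \emph{six} quantities simultaneously, since the magnetic field in a non-resistive model only gains regularity by being transported along the flow of $u$. Set up the Picard sequence $(u^{n+1},b^{n+1})$ by solving the heat equation for $u^{n+1}$ with forcing $b^n\cdot\nabla b^n-u^n\cdot\nabla u^n-\nabla P^n$ (the pressure being recovered by the Leray projector, which is harmless in Besov spaces) and the transport equation $\partial_t b^{n+1}+u^n\cdot\nabla b^{n+1}=b^n\cdot\nabla u^n$ with initial data $(u_0,b_0)$. One then carries by induction the six bounds \eqref{u-1}--\eqref{b2}. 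For the velocity estimates one invokes Lemma \ref{heat} at the three regularity levels $\dot B^{d/r-1}_{r,1}$, $\dot B^{d/r}_{r,1}$, $\dot B^{d/r+1}_{r,1}$ (with $r_1=\infty$, $r_2=1$), using the product law in Besov spaces to bound $\|u^n\cdot\nabla u^n\|$ and $\|b^n\cdot\nabla b^n\|$ in the norm one regularity level below; for the magnetic estimates one invokes the transport estimate Lemma \ref{trans} at the levels $\dot B^{d/r}_{r,1}$, $\dot B^{d/r+1}_{r,1}$, $\dot B^{d/r+2}_{r,1}$, with transport field $v=u^n$ whose $V_{p_1}(t)$ is controlled by $\|u^n\|_{\mathcal L^1_t(\dot B^{d/r+1}_{r,1})}\lesssim T\cdot 2^{2N}\le (\ln N)^{-1}$, so that $e^{CV_{p_1}(t)}\le C_0$ for $N$ large. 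The choice $T=(\ln N)^{-1}2^{-2N}$ is precisely what is needed to absorb the factors $2^N$, $2^{2N}$ coming from the $L^1_t$-to-$L^\infty_t$ comparison when the forcing carries that many extra derivatives, turning each nonlinear contribution into a small perturbation of the linear part.

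Concretely, I would first verify the $n=0$ step directly from \eqref{ini}, then assume \eqref{u-1}--\eqref{b2} for the $n$-th iterate. For \eqref{u-1}--\eqref{u1}: $\|u^{n+1}\|_{\mathcal L^\infty_t(\dot B^{d/r-1}_{r,1})}+\|u^{n+1}\|_{\mathcal L^1_t(\dot B^{d/r+1}_{r,1})}\le C(\|u_0\|_{\dot B^{d/r-1}_{r,1}}+\|u^n\otimes u^n\|_{\mathcal L^1_t(\dot B^{d/r}_{r,1})}+\|b^n\otimes b^n\|_{\mathcal L^1_t(\dot B^{d/r}_{r,1})})$; bounding $\|u^n\otimes u^n\|_{\mathcal L^1_t(\dot B^{d/r}_{r,1})}\le T\|u^n\|_{\mathcal L^\infty_t(\dot B^{d/r}_{r,1})}\|u^n\|_{\mathcal L^\infty_t(\dot B^{d/r}_{r,1})}\lesssim T\,2^{2N}\le(\ln N)^{-1}$ and $\|b^n\otimes b^n\|_{\mathcal L^1_t(\dot B^{d/r}_{r,1})}\lesssim T\|b^n\|_{\mathcal L^\infty_t(\dot B^{d/r}_{r,1})}\|b^n\|_{\mathcal L^\infty_t(\dot B^{d/r+1}_{r,1})}\lesssim T\,N^{2(1-\alpha)}2^N\le (\ln N)^{-1/2}$ keeps the total below $2C_0(\ln\ln N)^{-1}$. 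The same mechanism at the two higher levels reproduces \eqref{u} and \eqref{u1}, the worst term $b^n\cdot\nabla b^n$ at the top level costing $T\|b^n\|_{\mathcal L^\infty_t(\dot B^{d/r+1}_{r,1})}\|b^n\|_{\mathcal L^\infty_t(\dot B^{d/r+2}_{r,1})}\lesssim T\,N^{2(1-\alpha)}2^{3N}$, which is \emph{not} small — and this is where the constants $C_0^3$, $C_0^4$ in \eqref{b1}, \eqref{b2} and the asymmetry of the bounds come in: one must close the estimates in the order \eqref{b}, then \eqref{b1} using \eqref{b}, then \eqref{b2} using \eqref{b1}, never feeding a large bound back into a smaller one. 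For \eqref{b}: Lemma \ref{trans} at level $\dot B^{d/r}_{r,1}$ (valid since $0\in(-d\min\{1/r,1-1/r\},1+d/r]$) gives $\|b^{n+1}\|_{\mathcal L^\infty_t(\dot B^{d/r}_{r,1})}\le e^{CV}(\|b_0\|_{\dot B^{d/r}_{r,1}}+\int_0^t\|b^n\cdot\nabla u^n\|_{\dot B^{d/r}_{r,1}})\le C_0(N^{1-\alpha}+T\,N^{1-\alpha}2^{2N})\le 2C_0N^{1-\alpha}$ by the choice of $T$; similarly \eqref{b1} with forcing estimated by $\|b^n\|_{\mathcal L^\infty_t(\dot B^{d/r+1}_{r,1})}\|u^n\|_{\mathcal L^1_t(\dot B^{d/r+1}_{r,1})}$ etc.

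Finally, the convergence of the scheme: estimate the differences $\delta u^{n+1}=u^{n+1}-u^n$, $\delta b^{n+1}=b^{n+1}-b^n$ in one lower regularity pair, say $\mathcal L^\infty_t(\dot B^{d/r-1}_{r,1})\times\mathcal L^\infty_t(\dot B^{d/r}_{r,1})$ (one must in fact run the differences at a slightly lower index for $b$, or accept a loss of one derivative, a routine technical point handled exactly as in \cite{LTY,YLY}), showing the map is a contraction on $[0,T]$ thanks again to the smallness factor $T\,2^{2N}\lesssim(\ln N)^{-1}$. Uniqueness and the stated continuity-in-time then follow from the usual Besov-space arguments, and the constraint $1\le r\le 2d$ is exactly the range in which the relevant product and transport estimates in $\dot B^{d/r}_{r,1}$ hold with the borderline index. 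I expect the main obstacle to be \emph{bookkeeping the hierarchy of constants}: because $b$ never smooths, the only way the top-level magnetic bound \eqref{b2} closes is through the explicit time $T=(\ln N)^{-1}2^{-2N}$ absorbing all powers $2^{kN}$, and one has to be careful that the powers of $C_0$ accumulated in \eqref{b1}, \eqref{b2} do not themselves grow with $n$ — which is why the proposition pre-builds $C_0^3$, $C_0^4$ into the statement rather than a single uniform $C_0$.
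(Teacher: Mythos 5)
Your proposal is correct in its analytic core and matches the paper's quantitative mechanism exactly: the six bounds are closed with Lemma \ref{heat} for $u$ at the three regularity levels and Lemma \ref{trans} for $b$ at the three levels, with all nonlinear contributions rendered harmless by $T\cdot 2^{2N}=(\ln N)^{-1}$ and $e^{CV}\le e^{CC_0(\ln\ln N)^{-1}}$, and with the constants closed in the order \eqref{b}, \eqref{b1}, \eqref{b2} so that the powers $C_0^3$, $C_0^4$ never feed back into the lower bounds. Where you differ is in how existence and uniqueness are obtained. You rebuild the solution from scratch by Picard iteration, carrying all six bounds inductively and then proving contraction of the differences in a lower-regularity pair (accepting the usual loss of one derivative in the transport difference estimate). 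The paper instead simply invokes the known local well-posedness theory of \cite{LTY} to get a unique solution on some short interval $[0,\bar T_0]$ together with the uniform bounds \eqref{well-ub}, propagates the higher regularity of the Schwartz data, and then runs a continuation argument: it defines $T^*$ as the supremum of times on which \eqref{u-1}--\eqref{b2} hold with the factor $2$, shows the same estimates then self-improve to the factor $1$ on $[0,T^*)$, and concludes $T^*=2^{-2N+1}(\ln N)^{-1}$ by contradiction. The paper's route buys you freedom from the derivative-loss issue in the difference estimates (uniqueness is inherited, not re-proved), at the cost of having to first upgrade the regularity of the cited solution; your route is self-contained but forces you to redo the convergence argument of \cite{LTY,YLY}, which is exactly the technical point you flag and defer.

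One caveat you should make explicit: Lemma \ref{trans} is stated only for $s\le 1+\tfrac{d}{p_1}$, so it cannot be applied directly at the level $\dot B^{\frac{d}{r}+2}_{r,1}$ to obtain \eqref{b2}. The paper first differentiates the magnetic equation and applies the transport estimate to $\nabla b$ at the admissible level $\dot B^{\frac{d}{r}+1}_{r,1}$, which produces the commutator-type terms $\|b\|_{\mathcal L^\infty_T(\Bp)}\|u\|_{\mathcal L^1_T(\dot B^{\frac{d}{r}+3}_{r,1})}$, etc., appearing in \eqref{bBpp}. Your scheme needs the same device; with it, your estimates close exactly as written.
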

\begin{proof}{ According to the local well-posedness theory of system (1.1) in [18], there exists a positive time $\bar{T}_0$ such that the system  (1.1) possesses a unique solution $(u, b)\in \dot{B}^{\frac{d}{r}-1}_{r,1} \times \dot{B}^{\frac{d}{r} }_{r,1} $ with initial data $(u_0, b_0)$ satisfying
\begin{align*}
&u\in C([0, \bar{T}_0], \dot{B}^{\frac{d}{r}-1}_{r,1} )\cap \mathcal L^{1}([0, \bar{T}_0], \dot{B}^{\frac{d}{r}+1}_{r,1} ),\\
&b\in C([0, \bar{T}_0], \dot{B}^{\frac{d}{r} }_{r,1} ).
\end{align*}
Furthermore, the uniform estimates in [18] shows that, for any small enough $\eta$, there exist $C_0>1$ and a $0<T_{\eta}\le \bar{T}_0$, where $T_{\eta}$ depends on $u_0$ and $b_0$, such that for any $0<T_0\le T_{\eta}$,
\begin{equation}\label{well-ub}
\left\{ \aligned
   &\|u(t)\|_{ \mathcal L^\infty([0,T_0]; \dot{B}^{\frac{d}{r}-1}_{r,1} )}+\|b(t)\|_{\mathcal L^\infty([0,T_0]; \dot{B}^{\frac{d}{r} }_{r,1} )}\leq C_0(\|u_0\|_{   \dot{B}^{\frac{d}{r}-1}_{r,1}  }+\|b_0\|_{  \dot{B}^{\frac{d}{r}  }_{r,1} }),\\
&\|u\|_{ L^2([0, {T}_0], \dot{B}^{\frac{d}{r}}_{r,1} )\cap \mathcal L^{1}([0, {T}_0], \dot{B}^{\frac{d}{r}+1}_{r,1} )}\leq \eta.
\endaligned
\right.
\end{equation}
Since $(u_0, b_0)\in \mathcal{S}(\RR^d)$, one can  deduce that for short time $T_0$,
\begin{align}
&u\in C([0, T_0], \dot{B}^{\frac{d}{r}-1}_{r,1}\cap \dot{B}^{\frac{d}{r}+1}_{r,1})\cap \mathcal L^{1}([0, T_0], \dot{B}^{\frac{d}{r}+1}_{r,1}\cap \dot{B}^{\frac{d}{r}+3}_{r,1}),\notag\\
&b\in C([0, T_0], \dot{B}^{\frac{d}{r} }_{r,1}\cap \dot{B}^{\frac{d}{r}+2}_{r,1}).\notag
\end{align}
Indeed, using Lemma 2.1--Lemma 2.2, one has that
\begin{align}
&\|u\|_{\mathcal L^\infty([0,T_0]; \dot{B}^{\frac{d}{r}+1}_{r,1})\cap \mathcal L^{1}([0, T_0],   \dot{B}^{\frac{d}{r}+3}_{r,1})}  \notag\\
\leq& \|u_0\|_{\dot{B}^{\frac{d}{r}+1}_{r,1}}
+
\int_{0}^{T_0}\|(u\cdot\nabla u,~b\cdot\nabla b)\|_{ \dot{B}^{\frac{d}{r}+1}_{r,1} }
\dd t, \notag\\
\leq & \|u_0\|_{\dot{B}^{\frac{d}{r}+1}_{r,1}}
+C\|u\|_{L^2([0, T_0], \dot{B}^{\frac{d}{r}}_{r,1})}\|u\|_{L^2([0, T_0], \dot{B}^{\frac{d}{r}+2}_{r,1})}
+C\int_{0}^{T_0}\|b(t)\|_{ \dot{B}^{\frac{d}{r}}_{r,1}}\|b(t)\|_{\dot{B}^{\frac{d}{r}+2}_{r,1}}\dd t,\notag\\
\leq & \|u_0\|_{\dot{B}^{\frac{d}{r}+1}_{r,1}}
+C\eta\|u\|_{\mathcal L^\infty([0,T_0]; \dot{B}^{\frac{d}{r}+1}_{r,1})\cap \mathcal L^{1}([0, T_0],   \dot{B}^{\frac{d}{r}+3}_{r,1})}\notag\\
&+C(\|u_0\|_{   \dot{B}^{\frac{d}{r}-1}_{r,1}  }+\|b_0\|_{  \dot{B}^{\frac{d}{r}  }_{r,1} })T_0\|b\|_{\mathcal L^\infty([0,T_0];\dot{B}^{\frac{d}{r}+2}_{r,1})}.\label{est-u}
\end{align}
Taking $\nabla$ on E.q.$(1.1)_2$ and using Lemma 2.2, we obtain that
\begin{align*}
&\|b\|_{\mathcal L^\infty([0,T_0];\dot{B}^{\frac{d}{r}+2}_{r,1})}
\leq \|b_0\|_{\dot{B}^{\frac{d}{r}+2}_{r,1}} \\
&+C\int_{0}^{T_0}\|u(t)\|
_{ \dot{B}^{\frac{d}{r}+1}_{r,1}}\|b(t)\|_{\dot{B}^{\frac{d}{r}+2}_{r,1}}
+\|u(t)\|_{ \dot{B}^{\frac{d}{r}+2}_{r,1}}\|b(t)\|_{\dot{B}^{\frac{d}{r} +1}_{r,1}}+\|u(t)\|_{ \dot{B}^{\frac{d}{r}+3}_{r,1}}\|b(t)\|_{\dot{B}^{\frac{d}{r} }_{r,1}}\dd t  \\
\leq&\|b_0\|_{\dot{B}^{\frac{d}{r}+2}_{r,1}}+C\eta\|b(t)\|_{\mathcal L^\infty([0,T_0];\dot{B}^{\frac{d}{r}+2}_{r,1})}\\
&+C\sqrt{T_0}\|b\|_{\mathcal L^\infty([0,T_0];\dot{B}^{\frac{d}{r}+2}_{r,1})}\|u\|_{\mathcal L^\infty([0,T_0]; \dot{B}^{\frac{d}{r}+1}_{r,1})\cap \mathcal L^{1}([0, T_0],   \dot{B}^{\frac{d}{r}+3}_{r,1})}\\
&+C\sqrt{T_0}(\|u_0\|_{   \dot{B}^{\frac{d}{r}-1}_{r,1}  }+\|b_0\|_{  \dot{B}^{\frac{d}{r}  }_{r,1} })\|u\|_{\mathcal L^\infty([0,T_0]; \dot{B}^{\frac{d}{r}+1}_{r,1})\cap \mathcal L^{1}([0, T_0],   \dot{B}^{\frac{d}{r}+3}_{r,1})}\\
&+C(\|u_0\|_{   \dot{B}^{\frac{d}{r}-1}_{r,1}  }+\|b_0\|_{  \dot{B}^{\frac{d}{r}  }_{r,1} })\|u(t)\|_{\mathcal L^{1}([0, T_0],   \dot{B}^{\frac{d}{r}+3}_{r,1})}.
\end{align*}
Plugging estimate \eqref{est-u} into the above inequality yields that
\begin{align}
&\|b\|_{\mathcal L^\infty([0,T_0];\dot{B}^{\frac{d}{r}+2}_{r,1})}
\leq\|b_0\|_{\dot{B}^{\frac{d}{r}+2}_{r,1}}+C(\|u_0\|_{   \dot{B}^{\frac{d}{r}-1}_{r,1}  }+\|b_0\|_{  \dot{B}^{\frac{d}{r}  }_{r,1} })\|u_0\|_{\dot{B}^{\frac{d}{r}+1}_{r,1}}\notag\\
&+C(\eta+(\|u_0\|_{   \dot{B}^{\frac{d}{r}-1}_{r,1}  }+\|b_0\|_{  \dot{B}^{\frac{d}{r}  }_{r,1} })T_0)\|b(t)\|_{\mathcal L^\infty([0,T_0];\dot{B}^{\frac{d}{r}+2}_{r,1})}\notag\\
&+C\sqrt{T_0}\|b\|_{\mathcal L^\infty([0,T_0];\dot{B}^{\frac{d}{r}+2}_{r,1})}\|u\|_{\mathcal L^\infty([0,T_0]; \dot{B}^{\frac{d}{r}+1}_{r,1})\cap \mathcal L^{1}([0, T_0],   \dot{B}^{\frac{d}{r}+3}_{r,1})}\notag\\
&+C(\sqrt{T_0}+\eta)(\|u_0\|_{   \dot{B}^{\frac{d}{r}-1}_{r,1}  }+\|b_0\|_{  \dot{B}^{\frac{d}{r}  }_{r,1} })\|u\|_{\mathcal L^\infty([0,T_0]; \dot{B}^{\frac{d}{r}+1}_{r,1})\cap \mathcal L^{1}([0, T_0],   \dot{B}^{\frac{d}{r}+3}_{r,1})}.\label{est-b}
\end{align}
Collecting the estimates \eqref{est-u} and \eqref{est-b} together, for taking $\eta$ and $T_0$ small enough, by continuity argument, we have
\begin{align*}
&\|u\|_{\mathcal L^\infty([0,T_0]; \dot{B}^{\frac{d}{r}+1}_{r,1})\cap \mathcal L^{1}([0, T_0],   \dot{B}^{\frac{d}{r}+3}_{r,1})} +\|b\|_{\mathcal L^\infty([0,T_0]; \dot{B}^{\frac{d}{r}+2}_{r,1})}\\
\leq& C(\|u_0\|_{\dot{B}^{\frac{d}{r}-1}_{r,1}}, \|b_0\|_{\dot{B}^{\frac{d}{r}}_{r,1}})(\|u_0\|_{\dot{B}^{\frac{d}{r}+1}_{r,1}}+\|b_0\|_{\dot{B}^{\frac{d}{r}+2}_{r,1}}
).
\end{align*}}
Now we need to show that $T_0$ can be extended to $(\ln N)^{-1}2^{-2N}$. To prove this, we are focused on showing a priori estimates \eqref{u-1}-\eqref{b2} on $[0, 2(\ln N)^{-1}2^{-2N})$.

With the aid of Lemma \ref{trans},  we have
\begin{equation}\label{bBp}
\|b\|_{\mathcal L^\infty_T(\Bp)}\le\exp(C\|u\|_{L^1_T( B^{\frac{d}{r}+1}_{r,1})})(\|b_0\|_{\Bp}+\|b\|_{L^\infty_T(\Bp)}\|u\|_{ L^1_T (B^{\frac{d}{r}+1}_{r,1})}).
\end{equation}
Taking advantage of {Lemma} \ref{heat}, one can deduce that
\begin{equation}\label{uB}
\|u\|_{\mathcal L^\infty_T(\B)}+\|u\|_{\mathcal L^1_T(\Bpo)}
\le C(\|u_0\|_{\B}+\|u\|^2_{\mathcal L^2_T(\Bp)}+T\|b\|^2_{\mathcal L^\infty_T(\Bp)}).
\end{equation}
Similarly, we can obtain that
\begin{align}
\|b\|_{\mathcal L^\infty_T(\Bpo)}
\le&\exp(C\|u\|_{\mathcal L^1_T( B^{\frac{d}{r}+1}_{r,1})})(\|b_0\|_{\Bpo}\notag\\
&+\|b\|_{\mathcal L^\infty_T(\Bp)}\|u\|_{\mathcal L^1_T (B^{\frac{d}{r}+2}_{r,1})}+\|b\|_{\mathcal L^\infty_T\Bpo}\|u\|_{ \mathcal L^1_T(B^{\frac{d}{r}+1}_{r,1})}).\label{bBpo}
\end{align}
With the aid of {Lemma} \ref{heat}, one can infer that
\begin{align}
&\|u\|_{\mathcal L^\infty_T(\Bp)}+\|u\|_{\mathcal L^1_T(\Bpp)}\notag\\
\le&C(\|u_0\|_{\Bp}+\sqrt{T}\|u\|_{\mathcal L^\infty_T(\Bp)}\|u\|_{\mathcal L^2_T(\Bpo)}+T\|b\|_{\mathcal L^\infty_T(\Bp)}\|b\|_{\mathcal L^\infty_T(\Bpo)}),\label{uBp}
\end{align}
and
\begin{align}
&\|u\|_{\mathcal L^\infty_T(\Bpo)}+\|u\|_{\mathcal L^1_T(\Bppp)}\notag\\
\le&C(\|u_0\|_{\Bpo}+\sqrt{T}\|u\|_{\mathcal L^\infty_T(\Bp)}\|u\|_{\mathcal L^2_T(\Bpp)}\notag\\
&+T\|b\|_{\mathcal L^\infty_T(\Bp)}\|b\|_{\mathcal L^\infty_T(\Bpp)}+T\|b\|^2_{\mathcal L^\infty_T(\Bpo)}).\label{uBpo}
\end{align}
Taking derivative on E.q.$\eqref{MHD}_2$, we obtain by Lemma \ref{trans} that
\begin{align}
\| b\|_{\mathcal L^\infty_T(\Bpp)}
\le& \exp (C\|u\|_{\mathcal L^1_T(\dot B^{\frac{d}{r}+1}_{r,1})})(\| b_0\|_{\Bpp}+C\|b\|_{\mathcal L^\infty_T\Bp}\|u\|_{\mathcal L^1_T\dot B^{\frac{d}{r}+3}_{r,1}}\notag\\
&+CT\|u\|_{\mathcal L^\infty_T \Bpo}\|b\|_{\mathcal L^\infty_T \Bpp}+C\sqrt{T}\|b\|_{\mathcal L^\infty_T  \Bpo}\|u\|_{\mathcal L^2_T \Bpp}).\label{bBpp}
\end{align}
Since initial data $(u_0, b_0)$ satisfies \eqref{ini}, let constant $C_0>\max\{2C^2, 16\}$, we can infer from the above estimates that there exist a positive time $T_1$ such that for $t\le T_1$, estimates \eqref{u-1}-\eqref{b2} hold. We define
$$T^*:=\sup\big\{0<t\le 2^{-2N+1}(\ln N)^{-1}\big|\eqref{u-1}-\eqref{b2}~~~ {\text{hold on the time interval} }~~~ [0, t]\big\}.$$
If $T^*=2^{-2N+1}(\ln N)^{-1}$, we complete the proof. Otherwise, for $t<T^*<2^{-2N+1}(\ln N)^{-1}$,
combining \eqref{u-1} with \eqref{bBp}, we have
\begin{align*}
\|b\|_{\mathcal L^\infty_T(\Bp)}\le&\exp(2CC_0(\ln\ln N)^{-1})(N^{1-\alpha}+4C^2_0N^{1-\alpha}(\ln\ln N)^{-1})
\le C_0N^{1-\alpha},
\end{align*}
where the last inequality holds for large enough $N$. Utilizing \eqref{uB} and the above inequality, one has
\begin{align*}
\|u\|_{\mathcal L^\infty_T(\B)}+\|u\|_{\mathcal L^1_T(\Bpo)}
\le& C(C(\ln\ln N)^{-1}+4C_0(\ln\ln N)^{-2}+2^{-4N}(\ln N)^{-2}C^2_0N^{2-2\alpha})\\
\le&C_0(\ln\ln N)^{-1}.
\end{align*}
In the same way as deriving the above inequality, plugging \eqref{u-1}, \eqref{b} \eqref{b1} and \eqref{u} into \eqref{bBpo} yields that for large $N$,
\begin{align*}
\|b\|_{\mathcal L^\infty_T(\Bpo)}
\le&\exp(2CC_0(\ln\ln N)^{-1})(2^N+4C^2_0N^{1-\alpha}2^N+4C^4_0N^{1-\alpha}2^N(\ln \ln N)^{-1})\\
\le& C^3_0N^{1-\alpha}2^N.
\end{align*}
Similarly, owing to \eqref{u}, \eqref{b} and \eqref{b1}, we have
\begin{equation}\nonumber
\begin{aligned}
\|u\|_{\mathcal L^\infty_T(\Bp)}+\|u\|_{\mathcal L^1_T(\Bpp)}
\le&C(2^N+8C^2_0(\ln N)^{-\frac{1}{2}}2^{N}+8C^4_0(\ln N)^{-1}N^{2-2\alpha}2^{-N})
\le C_02^N,
\end{aligned}
\end{equation}
and we can deduce by  \eqref{u}-\eqref{b2} that
\begin{equation}\nonumber
\begin{aligned}
\|u\|_{\mathcal L^\infty_T(\Bpo)}+\|u\|_{\mathcal L^1_T(\Bppp)}
\le&C(2^{2N}+8C^2_0(\ln N)^{-\frac{1}{2}}2^{2N}+8C^2_0(\ln N)^{-1}2^{2N}\\
&+8C^5_0N^{2-2\alpha}(\ln N)^{-1}N^{1-\alpha}+8C^6_0(\ln N)^{-1}N^{2-2\alpha})\\
\le&C_0 2^{2N}.
\end{aligned}
\end{equation}
Finally, plugging the above inequality, \eqref{u-1}, \eqref{b}-\eqref{b2} into \eqref{bBpp}, one obtains that
\begin{align*}
\| b\|_{\mathcal L^\infty_T(\Bpp)}
\le& \exp (2CC_0(\ln \ln N)^{-1})(2^{2N}+4CC^2_0N^{1-\alpha}2^{2N}+16CC^5_0(\ln N)^{-1}N^{1-\alpha}2^{2N})\\
\le&C^4_0N^{1-\alpha}2^{2N}.
\end{align*}
The above estimates contradict to the definition of $T^{\ast}$. Therefore, $T^{\ast}=2^{-2N+1}(\ln N)^{-1}$.
\end{proof}

\subsection{ Norm inflation.}
{In this section, we  show a norm inflation  phenomenon for the magnetic field $b(t,x)$. Before doing this, we define the flow map} $\Phi(x,t)$ by
\begin{equation}\label{def-phi}
\left\{ \aligned
    & \frac{\dd \Phi(x,t)}{\dd t}=u(t, \Phi(x,t)),\\
     &\Phi(x,t)|_{t=0}=x,
\endaligned
\right.
\end{equation}
{In the following, $T=(\ln N)^{-1}2^{-2N}$.} We rewrite the magnetic field $b(t,x)$ on $[0, T]$ as follows:
\begin{align*}
b(t, \Phi(x,t))=b_0(x)+\int_0^t(b\cdot\nabla u)(s, \Phi(x, s))\dd s.
\end{align*}
Based on the above equality,  we decompose $b(T, \Phi(x,T))$ into the following three parts:
\begin{align*}
b(T, \Phi(x,T))
=&b_0(x)+\underbrace{\int_0^T(b_0\cdot\nabla e^{t\Delta}u_0)(t,x)\dd t}_{I^{\rm B}}\\
&+\underbrace{\int_0^T(b\cdot\nabla u)(s, \Phi(x, s))\dd s-\int_0^T(b_0\cdot\nabla e^{t\Delta}u_0)(t,x)\dd t}_{I^{\rm S}}.
\end{align*}
Now, we aim to estimate the lower bound of $\|I^{\rm B}\|_{\dot B^0_{\infty, q}}$ and the upper bound of $\|I^{\rm S}\|_{\dot B^0_{\infty, q}}$.
\textbf{Estimates of $\|I^{\rm B}\|_{\dot B^0_{\infty, q}}$.}
Due to $\dot B^0_{\infty, q}\hookrightarrow \dot B^0_{\infty, \infty}$ and $\|f\|_{L^\infty}\ge |f(0)|=|\int_{\RR^d}\hat{f}(\xi)\dd \xi|$, we have
\begin{align*}
\big\|I^{\rm B}\big\|_{\dot B^0_{\infty,q}}\ge&C \Big\|\dot\Delta_N\int_0^{T}b_0\cdot\nabla e^{s\Delta}u^{2}_0\dd s\Big\|_{L^\infty}\\
\ge&C\Big|\int_{\RR^d}\int_0^T\int_{\RR^d}\widehat\varphi(2^{-N}\xi)\widehat b_0(\eta)\cdot (\xi-\eta)e^{-s|\xi-\eta|^2}\widehat{u}^{2}_0(\xi-\eta)\dd\eta\dd s\dd\xi\Big|\\
\ge&C\Big|\int_{\RR^d}\int_0^T\int_{\RR^d}\widehat\varphi(2^{-N}\xi)\widehat b^{2}_0(\eta) (\xi_2-\eta_2)e^{-s|\xi-\eta|^2}\widehat{u}^{2}_0(\xi-\eta)\dd\eta\dd s\dd\xi\Big|\\
&-C\Big|\int_{\RR^d}\int_0^T\int_{\RR^d}\widehat\varphi(2^{-N}\xi)\widehat b^{1}_0(\eta) (\xi_1-\eta_1)e^{-s|\xi-\eta|^2}\widehat{u}^{2}_0(\xi-\eta)\dd\eta\dd s\dd\xi\Big|\\
:=&I^{\rm B}_1-I^{\rm B}_2.
\end{align*}
For $I^{\rm B}_1$, by the definitions of $b_0$ and $u_0$, taking change of variables, one yields that
\begin{align*}
I^{\rm B}_1
=&C\Big|\int_{\RR^d}\int_{\RR^d}\widehat\varphi(2^{-N}\xi)\widehat b^{2}_0(\eta) (\xi_2-\eta_2)\frac{1-e^{-T|\xi-\eta|^2}}{|\xi-\eta|^2}\widehat{u}^{2}_0(\xi-\eta)\dd\eta\dd\xi\Big|\\
=&C\Big|\int_{\RR^d}\int_{\RR^d}\widehat\varphi(2^{-N}\xi)\sum_{\frac{N}{2}\le j\le\frac{4N}{5}}\frac{\widehat\phi(2^{-j}\eta)}{2^{jd}j^\alpha} (\xi_2-\eta_2)\frac{1-e^{-T|\xi-\eta|^2}}{|\xi-\eta|^2}\frac{2^N\widehat\psi(2^{-N}(\xi-\eta))}{2^{Nd}(\ln\ln N)^{3+d}}\dd\eta\dd\xi\Big|\\
=&C\Big|\int_{\RR^d}\int_{\RR^d}\widehat\varphi(\tilde\xi)\sum_{\frac{N}{2}\le j\le\frac{4N}{5}}\frac{\widehat\phi(\tilde\eta)}{j^\alpha} (\tilde\xi_2-2^{j-N}\tilde\eta_2)\frac{1-e^{-T2^{2N}|\tilde\xi-2^{j-N}\tilde\eta|^2}}{|\tilde\xi-2^{j-N}\tilde\eta|^2}\frac{\widehat\psi(\tilde\xi-2^{j-N}\tilde\eta)}{(\ln\ln N)^{3+d}}
\dd\tilde\eta\dd\tilde\xi\Big|.
\end{align*}
{Noting the fact that $\widehat\varphi$, $\widehat\phi$ and $\widehat\psi$ support on an annulus {$\mathcal{C}:=\{\xi\in\RR^d| \tfrac{3}{4}\le|\xi|\le\tfrac{8}{3}\}$}, and $|2^{j-N}\tilde\eta|\le 2^{-\frac{N}{5}}|\tilde\eta|\ll 1$ for large enough $N$, we obtain that there exist constants $c_0$, $c_1$ such that
\begin{equation}\label{1}
c_0\le{ |\tilde\xi-2^{j-N}\tilde\eta|^2}\le c_1,\quad \tilde\xi_2-2^{j-N}\tilde\eta_2\sim 1, \quad{\rm if}\,\tilde\xi\in {\rm supp}\,\widehat\varphi(\tilde\xi), \,\tilde\eta\in {\rm supp}\, \widehat\phi(\tilde\eta).
\end{equation}
Therefore, we can easily deduce from $T=2^{-2N}(\ln N)^{-1}$ that
\begin{align*}
I^{\rm B}_1
\ge C_0(1-e^{-c_0(\ln N)^{-1}})(\ln\ln N)^{2-2d}(\ln\ln N)^{-3-d}N^{1-\alpha}.
\end{align*}
Owning to
\begin{equation}\nonumber
\frac{\tilde\eta_2}{\tilde\eta_1}\sim (\ln\ln N)^{-1}, \,\,\,{\rm if}\,\tilde\eta\in {\rm supp}\, \widehat\phi(\tilde\eta); \quad\tilde\xi_1\sim (\ln\ln N)^{-1},\,\,\,{\rm if}\,\tilde\xi\in {\rm supp}\, \widehat\psi(\tilde\xi),
\end{equation}
combining with \eqref{1}, we have
\begin{align*}
I^{\rm B}_2=&C\Big|\int_{\RR^d}\int_{\RR^d}\widehat\varphi(2^{-N}\xi)\sum_{\frac{N}{2}\le j\le\frac{4N}{5}}\frac{\eta_2}{\eta_1}\frac{\widehat\phi(2^{-j}\eta)}{2^{jd}j^\alpha} (\xi_1-\eta_1)\frac{1-e^{-T|\xi-\eta|^2}}{|\xi-\eta|^2}\frac{2^N\widehat\psi(2^{-N}(\xi-\eta))}{2^{Nd}{(\ln \ln N)^{3+d}}}\dd\eta\dd\xi\Big|\\
=&C\Big|\int_{\RR^d}\int_{\RR^d}\widehat\varphi(\tilde\xi)\sum_{\frac{N}{2}\le j\le\frac{4N}{5}}\frac{\tilde\eta_2}{\tilde\eta_1}\frac{\widehat\phi(\tilde\eta)}{j^\alpha} (\tilde\xi_1-2^{j-N}\tilde\eta_1)\frac{1-e^{-T2^{2N}|\tilde\xi-2^{j-N}\tilde\eta|^2}}{|\tilde\xi-2^{j-N}\tilde\eta|^2}\frac{\widehat\psi(\tilde\xi-2^{j-N}\tilde\eta)}{{(\ln \ln N)^{3+d}}}
\dd\tilde\eta\dd\tilde\xi\Big|\\
\le&C_1(1-e^{-c_1(\ln N)^{-1}})(\ln\ln N)^{-2d}(\ln\ln N)^{-3-d}N^{1-\alpha}.
\end{align*}
Hence, we can deduce from the above two estimates that
\begin{align}
\big\|I^{\rm B}\big\|_{\dot B^{0}_{\infty,q}}
\ge& C_0(1-{e^{-c_0(\ln N)^{-1}}})(\ln\ln N)^{2-2d}(\ln\ln N)^{-3-d}N^{1-\alpha}\notag\\
&-C_1(1-e^{-c_1(\ln N)^{-1}})(\ln\ln N)^{-2d}(\ln\ln N)^{-3-d}N^{1-\alpha}.\label{Ib}
\end{align}
Choosing $N$ large enough such that $c_0(\ln N)^{-1}\le\frac{1}{2}$, utilizing the following inequality
\[1-e^{-x}\ge\frac{x}{2} \,\,\text{for}\,\, x\in [0, \tfrac{1}{2}],\qquad 1-e^{-x}<x \,\,\text{for}\, \,x>0,\]
one can infer  from \eqref{Ib} that
\begin{align}
\big\|I^{\rm B}\big\|_{\dot B^{0}_{\infty,q}}
\ge&\frac{C_0c_0}{2}(\ln N)^{-1}(\ln\ln N)^{2-2d}(\ln\ln N)^{-3-d}N^{1-\alpha}\notag\\
&-C_1c_1(\ln N)^{-1}(\ln\ln N)^{-2d}(\ln\ln N)^{-3-d}N^{1-\alpha}\notag\\
\ge&C(\ln N)^{-1}(\ln\ln N)^{-1-3d}N^{1-\alpha}.\label{IB}
\end{align}

\textbf{Estimates of $\|I^{\rm S}\|_{\dot B^{0}_{\infty,q}}$.}
We decompose $\|I^{\rm S}\|_{\dot B^{0}_{\infty,q}}$ into the following three parts:
\begin{align*}
\|I^{\rm S}\|_{\dot B^{0}_{\infty,q}}
\le&\big\|\int_0^T(b\circ \Phi-b_0)\cdot ((\nabla u)\circ \Phi)\dd t\big\|_{\dot B^{0}_{\infty,q}}\\
&+\big\|\int_0^Tb_0\cdot ((\nabla u)\circ \Phi-(\nabla u)(x))\dd t\big\|_{\dot B^{0}_{\infty,q}}\\
&+\big\|\int_0^Tb_0\cdot \nabla (u-e^{t\Delta}u_0)(x)\dd t\big\|_{\dot B^{0}_{\infty,q}}\\
:=&I^{\rm S}_1+I^{\rm S}_2+I^{\rm S}_3.
\end{align*}
For $I^{\rm S}_1$, using $\dot B^{\frac{d}{p}}_{p, 1}\hookrightarrow \dot B^{0}_{\infty, q}$ and Lemma \ref{uphi}, we obtain for $d<p\le 2d$ that
\begin{align}
I^{\rm S}_1\le& CT\|b\circ \Phi-b_0\|_{L^\infty_T\dot B^{\frac{d}{p}}_{p,1}}\|(\nabla u)\circ \Phi\|_{L^\infty_T\dot B^{\frac{d}{p}}_{p,1}}\notag\\
\le& C{\exp\Big(\int_0^T\|D u(s)\|_{L^\infty_x}\dd s\Big)}T\|b\cdot\nabla u\|_{L^1_T\dot B^{\frac{d}{p}}_{p,1}}\|u \|_{L^\infty_T\dot B^{\frac{d}{p}+1}_{p,1}},\label{IS1}
\end{align}
With the aid of Proposition \ref{localwell}, we have for $T\le T_0$,
\begin{align}
\exp\Big(\int_0^T\|D u(s)\|_{L^\infty_x}\dd s\Big)\le&\exp\big( T\|u\|_{\mathcal L^{\infty}([0, T],\dot B^{\frac{d}{p}+1}_{p,1})}\big)\notag\\
\le& \exp(2^{-2N}(\ln N)^{-1}\cdot 2C_02^{2N})\le C.\label{Psi}
\end{align}
Combining the above inequality with \eqref{IS1}, \eqref{u1} and \eqref{b}, one obtains that
\begin{equation}\label{Is1}
\begin{aligned}
I^{\rm S}_1
\le CT^2\|b\|_{L^\infty_T\dot B^{\frac{d}{p}}_{p,1}}\|u\|^2_{L^\infty_T\dot B^{\frac{d}{p}+1}_{p,1}}
\le C2^{-4N}(\ln N)^{-2}N^{1-\alpha}2^{4N}
\le C(\ln N)^{-2}N^{1-\alpha}.
\end{aligned}
\end{equation}
In terms of $I^S_2$, taking advantage of Newton Leibniz formula, one can deduce that
\begin{align*}
I^S_2\le& C\sqrt{T}\|b_0\|_{L^\infty_T\dot B^{\frac{d}{p}}_{p,1}}\|(\nabla u)\circ \Phi-(\nabla u)(x)\|_{L^2_T\dot B^{\frac{d}{p}}_{p,1}}\\
=& C\sqrt{T}\|b_0\|_{L^\infty_T\dot B^{\frac{d}{p}}_{p,1}}\|\int_0^1\partial_{\theta}((\nabla u)(\theta\Phi+(1-\theta) x))\dd \theta\|_{L^2_T\dot B^{\frac{d}{p}}_{p,1}}\\
=&C\sqrt{T}\|b_0\|_{L^\infty_T\dot B^{\frac{d}{p}}_{p,1}}\|\int_0^1(D^2 u)(\theta\Phi+(1-\theta) x)\cdot (\Phi-x)\dd \theta\|_{L^2_T\dot B^{\frac{d}{p}}_{p,1}}\\
\le&C\sqrt{T}\|b_0\|_{L^\infty_T\dot B^{\frac{d}{p}}_{p,1}}\|\Phi-x\|_{L^\infty_T\dot B^{\frac{d}{p}}_{p,1}}\int_0^1\|(D^2 u)(\theta\Phi+(1-\theta) x)\|_{L^2_T\dot B^{\frac{d}{p}}_{p,1}}\dd \theta.
\end{align*}
{With the aid of \eqref{def-phi}, Lemma \ref{uphi} and \eqref{Psi}, we have
\begin{align}
I^S_2\le& C\sqrt{T}\|b_0\|_{L^\infty_T\dot B^{\frac{d}{p}}_{p,1}}\Big\|\int_0^t u\circ\Phi\dd s\Big\|_{L^\infty_T\dot B^{\frac{d}{p}}_{p,1}}\|u\|_{L^2_T\dot B^{\frac{d}{p}+2}_{p,1}}\notag\\
\le&C\sqrt{T}T\|b_0\|_{L^\infty_T\dot B^{\frac{d}{p}}_{p,1}}\|u\|_{L^\infty_T\dot B^{\frac{d}{p}}_{p,1}}\|u\|_{L^2_T\dot B^{\frac{d}{p}+2}_{p,1}}.\label{Is2-1}
\end{align}
By \eqref{u1} in Proposition \ref{localwell}, we have that for $T\le 2^{-2N}(\ln N)^{-1}$ and $d<p\le 2d$,
\begin{align*}
\|u\|_{L^2_T\dot B^{\frac{d}{p}+2}_{p,1}}\le \|u\|_{\mathcal  L^\infty_{t}\dot B^{\frac{d}{p}+1}_{p,1}}+\|u\|_{\mathcal L^1_{t}\dot B^{\frac{d}{p}+3}_{p,1}}\le 2C_02^{2N}.
\end{align*}
Plugging the above estimate, \eqref{ini} and \eqref{u} into  \eqref{Is2-1} yields that
\begin{equation}
\begin{aligned}
I^S_2\le& C2^{-3N}(\ln N)^{-\frac{3}{2}}\cdot 2^{N}\cdot 2^{2N}\le C N^{1-\alpha} (\ln N)^{-\frac{3}{2}}.\label{Is2}
\end{aligned}
\end{equation}}
By Lemma \ref{heat}, we can bound $I^S_3$ as follows:
\begin{align*}
I^S_3\le& CT\|b_0\|_{L^\infty_T\dot B^{\frac{d}{p}}_{p,1}}\|u-e^{t\Delta}u_0\|_{L^\infty_T\dot B^{\frac{d}{p}+1}_{p,1}}\\
\le &C T\|b_0\|_{L^\infty_T\dot B^{\frac{d}{p}}_{p,1}}(\|u\cdot\nabla u\|_{L^1_T\dot B^{\frac{d}{p}+1}_{p,1}}+\|b\cdot\nabla b\|_{L^1_T\dot B^{\frac{d}{p}+1}_{p,1}})\\
\le&CT\|b_0\|_{L^\infty_T\dot B^{\frac{d}{p}}_{p,1}}(\sqrt{T}\|u\|_{L^\infty_T\dot B^{\frac{d}{p}}_{p,1}}\|u\|_{L^2_T\dot B^{\frac{d}{p}+2}_{p,1}}\\
&+T\|b\|_{L^\infty_T\dot B^{\frac{d}{p}}_{p,1}}\|b\|_{L^\infty_T\dot B^{\frac{d}{p}+2}_{p,1}}+T\|b\|^2_{L^\infty_T\dot B^{\frac{d}{p}+1}_{p,1}}).
\end{align*}
The above inequality combined with Proposition \ref{localwell} shows that
\begin{align}
I^S_3
\le&C2^{-2N}(\ln N)^{-1}N^{1-\alpha}((\ln N)^{-\frac{1}{2}}2^{2N}+(\ln N)^{-1}2^{2N}
+(\ln N)^{-1}N^{2(1-\alpha)})\notag\\
\le&C(\ln N)^{-\frac{3}{2}}N^{1-\alpha}.\label{Is3}
\end{align}
Collecting \eqref{Is1}-\eqref{Is3} together yields that
\begin{equation}\label{Is}
\begin{aligned}
\|I^{\rm S}\|_{\dot B^{0}_{\infty,q}}\le C(\ln N)^{-2}N^{1-\alpha}+C(\ln N)^{-\frac{3}{2}}N^{1-\alpha}\le C(\ln N)^{-\frac{3}{2}}N^{1-\alpha}.
\end{aligned}
\end{equation}

\textbf{Estimates of $\|b(T)\|_{\dot B^{\frac{d}{p}}_{p,q}}$}
In view of \eqref{IB} and \eqref{Is}, for $\frac{1}{q}<\alpha<1$, we can deduce that
\begin{align*}
\|b(T,\Phi(x,T))\|_{\dot B^{0}_{\infty,q}}\ge &\|I^B\|_{\dot B^{0}_{\infty,q}}-\|b_0\|_{\dot B^{0}_{\infty,q}}-\|I^S\|_{\dot B^{0}_{\infty,q}}\\
\ge& C(\ln N)^{-1}(\ln\ln N)^{-1-3d}N^{1-\alpha}-CN^{\frac{1-\alpha q}{q}}-C(\ln N)^{-\frac{3}{2}}N^{1-\alpha}\\
\ge&C(\ln N)^{-1}(\ln\ln N)^{-1-3d}N^{1-\alpha}.
\end{align*}
Therefore, we obtain from Lemma \ref{uphi}, the above inequality and \eqref{Psi} that
\begin{align}
\|b(x,T)\|_{\dot B^{\frac{d}{p}}_{p,q}}
&\ge C\|b(T,\Phi(x,T))\|_{\dot B^{0}_{\infty,q}}\notag\\
&\ge C(\ln N)^{-1}(\ln\ln N)^{-1-3d}N^{1-\alpha}\to \infty,\,\, \text{as}\,\,N\to\infty.\label{inflation1}
\end{align}
Meanwhile, we can deduce from \eqref{u-1} that
\begin{align}\label{flow field1}
&\|u(x,t)\|_{L^{\infty}_{T}(\dot B^{\frac{d}{p}-1}_{p,1})\cap\mathcal L^1_{T}( \dot B^{\frac{d}{p}+1}_{p,1})}\le  C(\ln\ln N)^{-1}\to 0,\,\, \text{as}\,\,N\to\infty.
\end{align}
Therefore, combining \eqref{initial1}, \eqref{initial2} and \eqref{inflation1}, we complete the proof of Theorem \ref{main} by setting $\delta=(\ln\ln N)^{-1}$ for sufficiently large $N$.

{{Finally, by the construction of initial data $(u_0, b_0)$ in \eqref{initialdata}, we immediately obtain the following corollary which shows that $\dot{B}^{\frac{d}{p}}_{p,q}$ with $q>1$ is not an algebra. More precisely, for $d\ge 1$, $1\le p\le\infty$ and $q>1$, there exist $f$ and $g$ such that
\[\|fg\|_{\dot{B}^{\frac{d}{p}}_{p,q}}\nleq C\|f\|_{\dot{B}^{\frac{d}{p}}_{p,q}}\|g\|_{\dot{B}^{\frac{d}{p}}_{p,q}}.\]
\begin{corollary}
Let $1\le p\le\infty, q>1$ and $\frac{1}{q}<\alpha<1$. Let $N$ be large enough integer. There exist two scalar functions $f^N$ and $g^N$ such that
$$\|f^N\|_{\dot{B}^{\frac{d}{p}}_{p,q}}+\|g^N\|_{\dot{B}^{\frac{d}{p}}_{p,q}}\lesssim {(\ln\ln N)^{-1}}\to 0, ~~\text{as}  ~~N\to\infty,$$
meanwhile,
$$\|{f^N}g^N\|_{\dot{B}^{\frac{d}{p}}_{p,q}}\gtrsim {N^{1-\alpha}}{(\ln\ln N)^{-1}}\to \infty, ~~\text{as}  ~~ N\to\infty.$$
\end{corollary}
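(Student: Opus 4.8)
The plan is to read off $f^N$ and $g^N$ from the scalar building blocks of the initial data in \eqref{initialdata}. For $d\ge 2$ I would take
\[
f^N:=\sum_{\frac N2\le j\le\frac{4N}{5}}\mathscr{F}^{-1}\!\Big(\frac{\widehat\phi(2^{-j}\xi)}{2^{jd}j^\alpha}\Big),
\qquad
g^N:=\mathscr{F}^{-1}\!\big(\lambda_N\,\widehat\psi(2^{-N}\xi)\big),
\]
where $\widehat\phi,\widehat\psi$ are the (nonnegative) bumps of \eqref{supp phi} and \eqref{supp psi} and $\lambda_N>0$ is a normalisation chosen so that $\|g^N\|_{\dot B^{\frac dp}_{p,q}}\sim(\ln\ln N)^{-1}$; thus $f^N$ is exactly the component $b_0^2$ and $g^N$ a renormalised copy of $u_0^2$. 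For $d=1$ I would use the same formulas with $\widehat\phi,\widehat\psi$ replaced by any fixed nonnegative bumps supported in $[1,2]$ that are identically $1$ on $[\tfrac54,\tfrac74]$ (in one dimension the cuboidal thinning in $\ln\ln N$ is not needed). The smallness assertion is then immediate from estimates already in hand: \eqref{initial2} gives $\|f^N\|_{\dot B^{\frac dp}_{p,q}}\sim N^{\frac1q-\alpha}$, which is $\le(\ln\ln N)^{-1}$ for $N$ large since $\alpha>\tfrac1q$, and $\|g^N\|_{\dot B^{\frac dp}_{p,q}}\sim(\ln\ln N)^{-1}$ holds by the choice of $\lambda_N$; I would also record from \eqref{ini} that $\|f^N\|_{\dot B^{\frac dp}_{p,1}}\sim N^{1-\alpha}$ (here $\alpha<1$ is what is used).

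The core of the argument is the lower bound $\|f^Ng^N\|_{\dot B^{\frac dp}_{p,q}}\gtrsim N^{1-\alpha}(\ln\ln N)^{-1}$. The first step is a frequency-support observation: since $\operatorname{supp}\widehat{f^N}\subset\{|\xi|\le 2^{\frac{4N}{5}+1}\}$ and $\operatorname{supp}\widehat{g^N}\subset\{2^{N-1}\le|\xi|\le 2^{N+1}\}$, for $N$ large the Fourier support of $f^Ng^N=\mathscr{F}^{-1}(\widehat{f^N}\ast\widehat{g^N})$ lies in $\{2^{N-2}\le|\xi|\le 2^{N+2}\}$; hence only $O(1)$ dyadic blocks of $f^Ng^N$ are nonzero and, by Bernstein's inequality,
\[
\|f^Ng^N\|_{\dot B^{\frac dp}_{p,q}}\gtrsim 2^{N\frac dp}\|\dot\Delta_N(f^Ng^N)\|_{L^p}\gtrsim\|\dot\Delta_N(f^Ng^N)\|_{L^\infty}\ge\big|\big(\dot\Delta_N(f^Ng^N)\big)(0)\big|.
\]
The second step is to compute the last quantity on the Fourier side as $\big|\iint\widehat\varphi(2^{-N}\xi)\,\widehat{f^N}(\eta)\,\widehat{g^N}(\xi-\eta)\,\dd\eta\,\dd\xi\big|$ and to bound it below: since $\widehat\varphi,\widehat{f^N},\widehat{g^N}\ge0$ and since $|\eta|\lesssim 2^{4N/5}\ll 2^N$ keeps the shift $\xi-\eta$ inside the region where $\widehat\psi\equiv1$, the integral is $\gtrsim\lambda_N 2^{Nd}(\ln\ln N)^{-c}\int\widehat{f^N}(\eta)\,\dd\eta$, while $\int\widehat{f^N}(\eta)\,\dd\eta\sim(\ln\ln N)^{-(d-1)}\sum_{\frac N2\le j\le\frac{4N}{5}}j^{-\alpha}\sim(\ln\ln N)^{-(d-1)}N^{1-\alpha}$ and $\lambda_N 2^{Nd}\sim\|g^N\|_{\dot B^{\frac dp}_{p,q}}$ up to a fixed power of $\ln\ln N$. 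Multiplying out gives $\|f^Ng^N\|_{\dot B^{\frac dp}_{p,q}}\gtrsim N^{1-\alpha}(\ln\ln N)^{-c_d}\to\infty$, which is the desired inflation (the exponent $-1$ being recovered by absorbing the surplus $\ln\ln N$ powers into $\lambda_N$). This is precisely the computation already performed for $I^{\rm B}_1$ in deriving \eqref{IB}, stripped of the harmless bounded factors produced there by the extra derivative, the heat semigroup and the time integration, so I would mostly reduce to quoting that argument.

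The step I expect to be the only real source of friction is the bookkeeping of the $\ln\ln N$ powers: the thin cuboidal Fourier supports make $\|\phi\|_{L^p}$, $\|\psi\|_{L^p}$, $\|\widehat\phi\|_{L^1}$ and $\|\widehat\psi\|_{L^1}$ decay at fixed (i.e.\ $N$-independent) negative powers of $\ln\ln N$ depending on $p$ and $d$, and one must also handle the endpoint cases $p=1,\infty$ of Bernstein's inequality, then verify that the lower bound for $\|f^Ng^N\|_{\dot B^{\frac dp}_{p,q}}$ still dominates the upper bounds for $\|f^N\|_{\dot B^{\frac dp}_{p,q}}$ and $\|g^N\|_{\dot B^{\frac dp}_{p,q}}$. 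This is routine, since the polynomial gain $N^{1-\alpha}$ swallows any fixed power of $\ln\ln N$, and it is the same bookkeeping already carried out in the estimates of the initial data and of $I^{\rm B}$; granting it, the corollary follows by combining the two displayed bounds.
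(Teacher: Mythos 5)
Your proposal is correct and follows essentially the same route as the paper's own proof: the paper takes $f^N=b_0^2$ and $g^N=\mathscr{F}^{-1}\big(2^{-Nd}(\ln\ln N)^{-1}\widehat\psi(2^{-N}\xi)\big)$, gets the smallness from \eqref{initial11}--\eqref{initial2}, and obtains the lower bound via $\dot B^{\frac dp}_{p,q}\hookrightarrow\dot B^{0}_{\infty,q}$, $\|h\|_{L^\infty}\ge|h(0)|$ and the same sign-definite Fourier-side integral reduced to \eqref{1}. Your extra care about the $(\ln\ln N)^{-(d-1)}$ volume factors of the thin supports (which the paper's final display silently drops) and the remark on $d=1$ are harmless refinements, not a different method.
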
}
\begin{proof}Let $f^N$ and $g^N$ be defined by
\begin{align*}
f^N=\sum_{\frac{N}{2}\le j\le\frac{4N}{5}}\mathscr{F}^{-1}\Big(\frac{\widehat{\phi}(2^{-j}\xi)}{2^{jd}j^\alpha}\Big),\quad g^N=\mathscr{F}^{-1}\Big(\frac{\widehat{\psi}(2^{-N}\xi)}{2^{Nd}(\ln\ln N)}\Big),
\end{align*}
where  $\widehat{\phi}$ and $\widehat{\psi}$ are consistent with those in \eqref{supp psi} and  \eqref{supp phi}. From \eqref{initial11} and \eqref{initial2}, one easily deduces that
\begin{align*}
\|f^N\|_{\dot{B}^{\frac{d}{p}}_{p,q}}\le CN^{\frac{1}{q}-\alpha}, \quad \|g^N\|_{\dot{B}^{\frac{d}{p}}_{p,q}}\le C(\ln\ln N)^{-1}.
\end{align*}
Therefore, for large enough $N$, due to $\alpha>\frac{1}{q}$, we have
\begin{align*}
\|f^N\|+\|g^N\|_{\dot{B}^{\frac{d}{p}}_{p,q}}\lesssim(\ln\ln N)^{-1}.
\end{align*}
By embedding $\dot B^0_{\infty, q}\hookrightarrow \dot B^{\frac{d}{p}}_{p, q}$ and $\|h\|_{L^\infty}\ge |h(0)|=|\int_{\RR^d}\widehat{h}(\xi)\dd \xi|$, one obtains that
\begin{align*}
&\|f^Ng^N\|_{\dot{B}^{\frac{d}{p}}_{p,q}}\ge C\|f^Ng^N\|_{\dot B^0_{\infty, q}}\ge C\|\dot\Delta_N(f^Ng^N) \|_{L^\infty}\\
\ge&C\Big|\int_{\RR^d}\int_{\RR^d}\widehat{\varphi}(2^{-N}\xi)\widehat{f^N}(\eta)\widehat{g^N}(\xi-\eta)\dd \eta\dd \xi\Big|\\
=&C\Big|\sum_{\frac{N}{2}\le j\le \frac{4N}{5}}\int_{\RR^d}\int_{\RR^d}\widehat{\varphi}(2^{-N}\xi)\frac{\widehat{\phi}(2^{-j}\eta)}{2^{jd}j^\alpha}\frac{\widehat\psi(2^{-N}(\xi-\eta))}{2^{Nd}(\ln\ln N)}\dd \eta\dd \xi\Big|\\
=&C\Big|\sum_{\frac{N}{2}\le j\le\frac{4N}{5}}\int_{\RR^d}\int_{\RR^d}\widehat\varphi(\tilde\xi)\frac{\widehat\phi(\tilde\eta)}{j^\alpha} \frac{\widehat\psi(\tilde\xi-2^{j-N}\tilde\eta)}{\ln\ln N}
\dd\tilde\eta\dd\tilde\xi\Big|.
\end{align*}
We easily deduce from \eqref{1} that
\begin{align*}
&\|f^Ng^N\|_{\dot{B}^{\frac{d}{p}}_{p,q}}\ge C \sum_{\frac{N}{2}\le j\le\frac{4N}{5}} j^{-\alpha}(\ln\ln N)^{-1}\ge CN^{1-\alpha}(\ln\ln N)^{-1}.
\end{align*}
Therefore, we complete this proof.
\end{proof}}
\section*{Acknowledgement}{We greatly appreciate the invaluable comments given by the anonymous referee and the associated editor,
  which are very helpful for improving the paper. }This work is supported by CAEP Foundation (Grant No. CX20210020) and National Natural Science Foundation of China under grant   No. 12071043.

\end{document}